\crefname{equation}{}{}
\crefname{algocf}{Algorithm}{Algorithms}
\crefname{equation}{}{} %remove ``Equation''
\colorlet{refkey}{orange!20}
\colorlet{labelkey}{blue!30}
\crefname{algocf}{Algorithm}{Algorithms}
\numberwithin{equation}{section}
\newtheorem{theorem}{Theorem}[section]
\newtheorem{proposition}[theorem]{Proposition}
\newtheorem{lemma}[theorem]{Lemma}
\crefname{claim}{Claim}{Claims}
\newtheorem*{question*}{Question}
\theoremstyle{definition}
\newtheorem{definition}[theorem]{Definition}
\newtheorem*{definition*}{Definition}
\theoremstyle{remark}
\let\originalleft\left
\let\originalright\right
\renewcommand{\left}{\mathopen{}\mathclose\bgroup\originalleft}
\renewcommand{\right}{\aftergroup\egroup\originalright}
\newcommand{\mb}{\mathbb}
\newcommand{\on}{\operatorname}
\global\long\def\RR{\mathbb{R}}%
\global\long\def\E{\mathbb{E}}%
\global\long\def\NN{\mathbb{N}}%
\global\long\def\ZZ{\mathbb{Z}}%
\global\long\def\supp{\operatorname{supp}}%
\global\long\def\range#1{\left[#1\right]}%
\global\long\def\d{\operatorname{d}\!}%
\global\long\def\floor#1{\left\lfloor #1\right\rfloor }%
\global\long\def\ceil#1{\left\lceil #1\right\rceil }%
\global\long\def\cond{\,\middle|\,}%
\let\polishL\L
\global\long\def\L{\mathcal{L}}%
\DeclareRobustCommand{\L}{\ifmmode{\mathcal{L}}\else\polishL\fi}
\global\long\def\ord{\mathcal{O}}%
\global\long\def\ordm#1{\mathcal{O}_{#1}}%
\global\long\def\cordm#1#2{\mathcal{O}_{#2}^{#1}}%
\global\long\def\cSm#1#2{\mathcal{L}_{#2}^{#1}}%
\global\long\def\ext#1{\mathcal{O}^\ast\left(#1\right)}%
\global\long\def\randN{\boldsymbol{N}}%
\global\long\def\randX{\boldsymbol{X}}%
\global\long\def\rg#1#2{\RR\left(#1,#2\right)}%
\global\long\def\randG{\boldsymbol{G}}%
\global\long\def\randY{\boldsymbol{Y}}%
\global\long\def\randZ{\boldsymbol{Z}}%
\global\long\def\randM{\boldsymbol{M}}%
\global\long\def\randL{\boldsymbol{L}}%
\title{Note on random Latin squares and the triangle removal process}
\author[Kwan]{Matthew Kwan}
\address{Institute of Science and Technology (IST) Austria}
\email{matthew.kwan@ist.ac.at}
\author[A2]{Ashwin Sah}
\author[A3]{Mehtaab Sawhney}
\address{Department of Mathematics, Massachusetts Institute of Technology, Cambridge, MA}
\email{\{asah,msawhney\}@mit.edu}
\thanks{Part of this research was done while Kwan was working at Stanford University and ETH Zurich, and was supported in part by NSF grant DMS-1953990 and SNSF project 178493. Sah was supported by The Paul \& Daisy Soros Fellowship. Sah and Sawhney were supported by NSF Graduate Research Fellowship Program DGE-1745302.}
\begin{document}

\begin{abstract}
This is a companion note to the paper ``Almost all Steiner triple
systems have perfect matchings'' (arXiv:1611.02246). That paper contains
several general lemmas about random Steiner triple systems; in this
note we record analogues of these lemmas for random Latin squares,
which in particular are necessary ingredients for our recent paper
``Large deviations in random Latin squares'' (arXiv:2106.11932). Most important is a relationship between uniformly random order-$n$ Latin squares and the triangle removal process on the complete tripartite graph $K_{n,n,n}$.
\end{abstract}

\maketitle

\section{Introduction}

An order-$n$ \emph{Latin square} is usually defined as an $n\times n$ array
of the numbers between 1 and $n$ (we call these \emph{symbols}),
such that each row and column contains each symbol exactly once. In \cite{Kwa20}, Kwan introduced some general probabilistic techniques for studying so-called \emph{Steiner triple systems}, and described how these techniques can be extended to Latin squares. The purpose of this note is to record complete proofs of various lemmas about random Latin squares, which are analogues of the lemmas in \cite{Kwa20}. In particular, these lemmas are ingredients in our recent paper on large deviations on random Latin squares~\cite{KSS21}.

We emphasise that the proofs in this note are almost exactly the same as the proofs of corresponding lemmas in \cite{Kwa20}; the goal of this note is completeness, not new ideas. Also, we refer the reader to \cite{Kwa20,KSS21} for further references, motivation and background on this topic.

First, it will be more convenient for us to make a slightly different (equivalent)
definition of a Latin square, in terms of 3-uniform hypergraphs.
\begin{definition}[Latin squares]Define
\[
R=R_{n}=\left\{ 1,\dots,n\right\} ,\qquad C=C_{n}=\left\{ n+1,\dots,2n\right\} ,\qquad S=S_{n}=\left\{ 2n+1,\dots,3n\right\} .
\]
We call the elements of $R,C,S$ \emph{rows}, \emph{columns} and \emph{symbols}
respectively. Then, a \emph{partial Latin square} (of order $n$)
is a 3-partite 3-uniform hypergraph with 3-partition $V:=R\cup C\cup S$,
such that no pair of vertices is involved in more than one edge. Let
$\L_{m}$ be the set of partial Latin squares with $m$ hyperedges.
A \emph{Latin square} is a partial Latin square with exactly $N:=n^{2}$
hyperedges (this is the maximum possible, and implies that every pair
of vertices in different parts is contained in exactly one edge).
Let $\L$ be the set of Latin squares.
\end{definition}

\begin{definition}[Ordered Latin squares]
Let $\ord$ be the set of ordered Latin squares (i.e., Latin squares
with an ordering on their set of hyperedges), and let $\ordm m$ be
the set of ordered partial Latin squares with $m$ hyperedges. For
$L\in\ordm m$ and $i\le m$, let $L_{i}$ be the ordered partial
Latin square consisting of just the first $i$ hyperedges of $L$.
\end{definition}

\begin{definition}[Triangle removal process]
The (3-partite) \emph{triangle removal process} is defined as follows.
Start with the complete 3-partite graph $K_{n,n,n}$ on the vertex
set $R\cup C\cup S$. At each step, consider the set of all triangles
in the current graph, select one uniformly at random, and remove it.
Note that after $m$ steps of this process, the removed triangles
can be interpreted as an ordered partial Latin square $L\in\ord_{m}$
(unless we run out of triangles before the $m$th step). Let $\RR(n,m)$
be the distribution on $\ord_{m}\cup\{\ast\}$ obtained from $m$ steps
of the triangle removal process (where ``$\ast$'' corresponds to the
event that we run out of triangles). Note that it also makes sense to run the triangle removal process starting from some $G\subseteq K_{n,n,n}$ instead of starting from $K_{n,n,n}$ itself.
\end{definition}

\begin{definition}[Quasirandomness]
For this definition (and occasionally henceforth) we write $V^{1},V^{2},V^{3}$ instead of $R,C,S$
for the three parts of $K_{n,n,n}$, and we write $V=V^{1}\cup V^{2}\cup V^{3}$
for the vertex set of $K_{n,n,n}$. The \emph{density} of a subgraph $G\subseteq K_{n,n,n}$
is defined to be $d\left(G\right)=e\left(G\right)/\left(3N\right)$. A subgraph
$G\subseteq K_{n,n,n}$ is \emph{$(\varepsilon,h)$-quasirandom} if
for each $q\in\{1,2,3\}$, every set $A\subseteq V\setminus V^{q}$
with $|A|\le h$ has $(1\pm\varepsilon)d(G)^{|A|}n$ common neighbours
in $V^{q}$. For a (possibly ordered) partial Latin square $L$, let
$G(L)$ be the graph consisting of those edges of $K_{n,n,n}$ which
are not included in any hyperedge of $L$ (so if $m=N$ ($=n^{2}$)
then $G(L)$ is always the empty graph, and if $m=0$ then always
$G(L)=K_{n,n,n}$). Let $\mathcal{L}_{m}^{\varepsilon,h}$ be the
set of partial Latin squares $P\in\mathcal{L}_{m}$ such that $G(P)$
is $(\varepsilon,h)$-quasirandom, and let $\cordm{\varepsilon,h}m\subseteq\ordm m$
be the set of ordered partial Latin squares $L\in\ordm m$ such that
$L_{i}\in\cSm{\varepsilon,h}i$ for each $i\le m$.
\end{definition}

\begin{definition}[Binomial random hypergraph]
Let $\mathbb{G}^{3}\left(n,p\right)$ be the probability distribution
on 3-partite 3-uniform hypergraphs with vertex set $R\cup C\cup S$,
where every possible hyperedge respecting the 3-partition is included
with probability $p$ (so, the expected number of edges is $pn^{3}$).
\end{definition}

Now, our lemmas are as follows. Recall that $N=n^2$. The first lemma states that quasirandom partial Latin squares have similar amounts of completions, up to multiplicative factors of $\exp(O(n^{2-\Omega(1)}))$. It is proved in \cref{sec:completion-count}.

\begin{lemma}\label{lem:num-extensions}
For an ordered partial Latin square $L\in\ordm m$,
let $\ext L\subseteq\ord$ be the set of ordered Latin squares $L^\ast$
such that $L_{m}^\ast=L$. For sufficiently large $h\in\NN$ and
any $a>0$, there is $b=b\left(a,h\right)>0$ such that the following
holds. For any fixed $\alpha\in\left(0,1\right)$, if $\varepsilon= n^{-a}$
then any $L,L'\in\cordm{\varepsilon,h}{\alpha N}$ satisfy
\[
\frac{\left|\ext L\right|}{\left|\ext{L'}\right|}\le\exp\left(O\left(n^{2-b}\right)\right).
\]
\end{lemma}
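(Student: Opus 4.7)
The approach is to analyze $\lvert\ext{L}\rvert$ via the triangle removal process on $G(L)$. Since each completed trajectory of the process corresponds bijectively to an ordered extension of $L$, and a specific extension $L^\ast \in \ext{L}$ is produced with probability $\prod_{i=m+1}^N 1/t_i(L^\ast)$, where $t_i(L^\ast)$ is the number of triangles at the start of step $i$ of the process, we obtain the key identity
\[
\lvert\ext{L}\rvert \;=\; \sum_{L^\ast \in \ext{L}} \prod_{i=m+1}^N t_i(L^\ast) \cdot \prod_{i=m+1}^N \frac{1}{t_i(L^\ast)} \;=\; \E\!\left[\prod_{i=m+1}^N t_i \cdot \one[\text{process completes}]\right].
\]
The task reduces to showing that $\prod t_i$ concentrates across trajectories at a quantity $P_{n,m}$ depending only on $n$ and $m$; the ratio bound then follows automatically.

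\textbf{Core input: quasirandomness preservation.} I would prove (following the Steiner triple system argument in \cite{Kwa20} almost verbatim, modified to the $3$-partite setting) that, starting from an $(n^{-a}, h)$-quasirandom graph, the triangle removal process remains $(n^{-b'}, h)$-quasirandom through step $N - n^{2-\delta}$, except with probability $\exp(-n^{c})$ for some $c > 2$ and constants $b', \delta > 0$ depending only on $a$ and $h$. This is a Freedman-type martingale concentration argument, tracking the density (which evolves deterministically along $d_i = 1 - i/N$) together with all relevant codegree statistics, controlling one-step increments and quadratic variation and applying a union bound. On this ``good event,'' $t_i = (1 \pm n^{-b'}) d_i^3 n^3$, so
\[
\prod_{i=m+1}^{N-n^{2-\delta}} t_i \;=\; P_{n,m} \cdot \exp\!\left(O\!\left(n^{2-b'}\right)\right), \qquad P_{n,m} \;:=\; \prod_{i=m+1}^{N-n^{2-\delta}} d_i^3 n^3,
\]
and crucially $P_{n,m}$ is independent of $L$.

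\textbf{Upper and lower bounds.} For the upper bound, the good-event contribution to $\E[\prod t_i \cdot \one[\text{completes}]]$ is bounded using the quasirandom estimate for the main phase plus the crude endgame bound $t_i \le n^3$, yielding at most $P_{n,m} \exp(O(n^{2-b'}) + O(n^{2-\delta} \log n))$; the bad-event contribution is at most $n^{3(N-m)}\exp(-n^c)$, which is negligible since $c > 2$. For the lower bound, I count reachable quasirandom configurations at time $N - n^{2-\delta}$: the identity $\E[\prod_{i \le N - n^{2-\delta}} t_i \cdot \one[\text{good}]] = \lvert\{\text{good trajectories}\}\rvert$ together with the probability bound $\Pr[\text{good}] \ge 1/2$ gives at least $\tfrac{1}{2} P_{n,m} \exp(-O(n^{2-b'}))$ such configurations, each of which admits $(n^{2-\delta})!$ ordered completions by invoking Keevash's theorem on triangle decompositions of quasirandom graphs (one unordered decomposition times its orderings). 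Since $\log((n^{2-\delta})!) = \Theta(n^{2-\delta} \log n)$ cancels the crude endgame factor from the upper bound up to a constant multiple, both $\lvert\ext{L}\rvert$ and $\lvert\ext{L'}\rvert$ are sandwiched within $\exp(O(n^{2-b}))$ of $P_{n,m} \cdot (n^{2-\delta})!$ for any $b < \delta$, yielding the claimed ratio.

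\textbf{Main obstacle.} The technically heaviest piece is the quasirandomness-preservation lemma with $c > 2$ concentration: one must simultaneously track $\Theta(n^h)$ codegree quantities, close up a self-correcting system of estimates for their fluctuations, and apply Freedman with bounds on increments and quadratic variation strong enough to survive the union bound. The secondary issue is the endgame, where our concentration breaks down and a lower bound on completions must be imported from an external design-existence result (Keevash's theorem, which applies because the graphs are automatically triangle-divisible once quasirandom).
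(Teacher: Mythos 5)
Your key identity
\[
\lvert\ext{L}\rvert \;=\; \E\!\left[\prod_{i=m+1}^N t_i \cdot \one[\text{process completes}]\right]
\]
is correct, and your lower bound strategy (concentration of $\prod t_i$ on the good event, followed by Keevash's absorption theorem to finish off the last $n^{2-\delta}$ edges) is essentially the paper's \cref{thm:number-of-completions-lower}. However, the upper bound has a fatal gap.

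The problem is the bad-event contribution. Observe that $P_{n,m} = \prod_{i\le M} d_i^3 n^3 = n^{3(M-m)}\exp\bigl(3\sum_i\log d_i\bigr)$, and $\sum_i\log d_i = N\int\log(1-t)\,\d t + O(\log n) = -\Theta(n^2)$; so $P_{n,m} \approx n^{3(M-m)}\exp(-\Theta(n^2))$. On the other hand, a single bad trajectory can contribute up to $\prod t_i \le n^{3(N-m)}$. For the bad-event term $n^{3(N-m)}\Pr[\text{bad}]$ to be dominated by the good-event term $P_{n,m}\exp(O(n^{2-b}))$, you therefore need $\Pr[\text{bad}] \le \exp(-\Omega(n^2))$ (with a sufficiently large constant), which is precisely why you wrote ``$c>2$.'' But this is unachievable. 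The triangle removal process runs for $O(n^2)$ steps with one-step increments of $O(1)$ in each tracked codegree variable $\randY_A$, and each $\randY_A$ evolves from $\approx n$ to $0$, so its total quadratic variation is only $O(n)$; Freedman gives at best a failure probability of $\exp(-\Omega(n))$ for a deviation of order $n$, and only $\exp(-\Omega(n^{1-2ca}))$ for the deviation scale $\varepsilon n = n^{1-a}$ you actually need. This is exactly the bound achieved in \cref{thm:triangle-removal-analysis}, and it is many orders of magnitude too weak: you are asking for a concentration exponent of order $n^2\log n$, but the process simply does not have that much randomness in any single codegree statistic.

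The paper avoids this issue entirely by proving the upper bound by a completely different (and deterministic) route: the entropy method (\cref{thm:number-of-completions-upper}). One takes a \emph{uniformly random} completion $\randL^\ast$ of the quasirandom partial Latin square $L$, writes $\log|\Sext L| = H(\randL^\ast)$, expands by the chain rule in a random edge-ordering, and bounds each conditional entropy term by the logarithm of the number of available symbols, which quasirandomness controls. This gives the upper bound unconditionally, with no process analysis, no bad event, and hence no need for super-polynomial-exponent tail bounds. So your lower bound is on the right track, but your upper bound needs to be replaced by an entropy (or comparable counting) argument; as written it cannot be completed.
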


The second lemma states that quasirandom partial Latin squares are output by the triangle removal process with comparable probabilities. It is proved in \cref{sec:greedy-random-approx-uniform}.

\begin{lemma}
\label{lem:greedy-random-approx-uniform}The following holds for any
fixed $a\in\left(0,2\right)$ and $\alpha\in\left(0,1\right)$. Let
$\varepsilon=n^{-a}$, let $L,L'\in\cordm{\varepsilon,2}{\alpha N}$
and let $\randL\sim\rg n{\alpha N}$. Then
\[
\frac{\Pr\left(\randL=L\right)}{\Pr\left(\randL=L'\right)}\le\exp\left(O\left(n^{2-a}\right)\right).
\]
\end{lemma}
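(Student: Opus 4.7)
The plan is to write $\Pr(\randL = L)$ as an explicit product over steps of the triangle removal process and then use $(\varepsilon, 2)$-quasirandomness to estimate each factor.

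At step $i$ of the triangle removal process, given that the first $i-1$ triangles removed correspond to the ordered partial Latin square $L_{i-1}$, the next triangle is chosen uniformly from all triangles of $G(L_{i-1})$. Thus, if we let $T(H)$ denote the number of triangles in a subgraph $H \subseteq K_{n,n,n}$, then for any $L \in \ordm{m}$ we have
\[
\Pr(\randL = L) \;=\; \prod_{i=1}^{m} \frac{1}{T(G(L_{i-1}))},
\]
provided every $L_{i-1}$ is a valid prefix (which is guaranteed here since $L \in \cordm{\varepsilon, 2}{\alpha N}$). The task therefore reduces to comparing $\prod_{i} T(G(L_{i-1}))$ with $\prod_{i} T(G(L'_{i-1}))$.

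The second step is to show that, for any $L \in \cordm{\varepsilon, 2}{\alpha N}$ and any $i \le \alpha N$, the count $T(G(L_{i-1}))$ is determined up to a factor of $(1 \pm O(\varepsilon))$ by $i$ alone. The crucial observation is that the edge density $d_i := d(G(L_{i-1})) = 1 - (i-1)/N$ depends only on $i$, because each hyperedge of $L_{i-1}$ removes exactly three edges from $K_{n,n,n}$. By the $(\varepsilon, 2)$-quasirandomness of $G(L_{i-1})$, for each pair of vertices in different parts of $K_{n,n,n}$ the number of common neighbors in the third part is $(1 \pm \varepsilon) d_i^2 n$, and the number of edges between any two parts is $(1 \pm \varepsilon) d_i n^2$. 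Counting triangles by summing, over edges $uv$ between parts $V^1$ and $V^2$, the size of the common neighborhood in $V^3$, this yields
\[
T(G(L_{i-1})) \;=\; (1 \pm O(\varepsilon)) \, d_i^3 \, n^3.
\]
The same estimate holds for $L'$, with identical main term $d_i^3 n^3$ since $d_i$ is the same.

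Taking the ratio and telescoping, every $d_i^3 n^3$ factor cancels, leaving
\[
\frac{\Pr(\randL = L)}{\Pr(\randL = L')} \;=\; \prod_{i=1}^{\alpha N} \frac{1 \pm O(\varepsilon)}{1 \pm O(\varepsilon)} \;=\; \prod_{i=1}^{\alpha N}\bigl(1 + O(\varepsilon)\bigr),
\]
using that $d_i \ge 1 - \alpha$ is bounded away from $0$ since $\alpha \in (0,1)$ is fixed. Taking logarithms and using $\log(1+O(\varepsilon)) = O(\varepsilon)$ for small $\varepsilon = n^{-a}$ gives a bound of $O(\alpha N \cdot \varepsilon) = O(n^{2-a})$, as required.

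There is no real obstacle here beyond the bookkeeping; the argument is essentially forced once one notices that the density $d_i$ is deterministic in $i$ (so the main terms cancel on the nose) and that $h = 2$ is exactly the level of quasirandomness needed to count triangles via common neighborhoods of edges.
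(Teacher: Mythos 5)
Your proof is correct and follows essentially the same route as the paper's: factor $\Pr(\randL = L)$ as a product of reciprocals of triangle counts along the process, use $(\varepsilon,2)$-quasirandomness to show each count is $(1\pm O(\varepsilon))(1-i/N)^3 n^3$ (this is exactly the paper's Proposition~\ref{prop:H-count}), observe the main terms are deterministic in $i$ and cancel in the ratio, and bound $(1+O(\varepsilon))^{\alpha N} \le \exp(O(n^{2-a}))$. The only cosmetic difference is that you inline the triangle-counting estimate instead of quoting it as a separate proposition.
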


The third lemma essentially states that for any Latin square, most random subsets of its edges look quasirandom. It is proved in \cref{sec:randomly-ordered}.

\begin{lemma}
\label{lem:random-is-typical}The following holds for any fixed $h\in\NN$,
$\alpha\in\left(0,1\right)$ and $a\in\left(0,1/2\right)$. Let $\varepsilon=n^{-a}$,
consider any Latin square $L$, and uniformly at random order its
hyperedges to obtain an ordered Latin square $\randL\in\ord$. Then
$\Pr\left(\randL_{\alpha N}\notin\cordm{\varepsilon,h}{\alpha N}\right)=\exp\left(-\Omega\left(n^{1-2a}\right)\right)$.
\end{lemma}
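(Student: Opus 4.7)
The plan is a union bound: for each triple $(i, q, A)$ with $i \le \alpha N$, $q \in \{1,2,3\}$, and $A \subseteq V \setminus V^q$ with $|A| \le h$, I will show that the number of common $V^q$-neighbors of $A$ in $G(\randL_i)$ lies in $(1 \pm \varepsilon)(1 - i/N)^{|A|} n$ except with probability $\exp(-\Omega(n^{1-2a}))$. There are at most $O(n^{h+2})$ such triples, so the polynomial union-bound factor is absorbed into the exponential.

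Fix $(i, q, A)$ and let $T \subseteq L$ be the (unordered) set of the first $i$ hyperedges of $\randL$, a uniform $i$-subset of $L$. For $v \in V^q$, let $E_v^A$ be the set of hyperedges of $L$ containing $\{a, v\}$ for some $a \in A$; then $v$ is a common $V^q$-neighbor iff $T \cap E_v^A = \emptyset$, and $|E_v^A| = |A|$ for all but $O(1)$ vertices $v$. Set $H = \bigcup_{v \in V^q} E_v^A$; since each $a \in A$ is contained in exactly $n$ hyperedges of $L$, we have $|H| \le |A|\,n = O(n)$. Crucially, the random variable $X := |\bigcap_{a \in A} N_{G(\randL_i)}(a) \cap V^q|$ depends on $T$ only through $T \cap H$. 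A short computation using $\binom{N-k}{i}/\binom{N}{i} = (1 - i/N)^k + O(k^2/n^2)$ yields $\E X = (1 - i/N)^{|A|} n + O(1)$, so it suffices to prove $|X - \E X| = O(\varepsilon n)$ with the claimed probability.

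I then condition on $w := |T \cap H|$, a hypergeometric variable with mean $w^* = |H|\, i / N$. A hypergeometric tail bound yields $\Pr(|w - w^*| > \varepsilon n) \le \exp(-\Omega(\varepsilon^2 n^2 / |H|)) = \exp(-\Omega(n^{1-2a}))$. Conditional on $w$, the set $T \cap H$ is a uniform $w$-subset of $H$, and $X$ viewed as a function of this subset has bounded differences at most $2$ under any single swap — each $\eta \in H$ lies in $E_v^A$ for the unique $v$ which is the $V^q$-vertex of $\eta$. A McDiarmid-type inequality for sampling without replacement on the ground set $H$ of size $O(n)$ then gives $\Pr(|X - \E[X \mid w]| > \varepsilon n \mid w) \le \exp(-\Omega(\varepsilon^2 n^2 / |H|)) = \exp(-\Omega(n^{1-2a}))$. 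Finally, from the closed form $\E[X \mid w] = \sum_v \binom{|H| - |E_v^A|}{w}/\binom{|H|}{w}$ one checks that $w \mapsto \E[X \mid w]$ is $O(1)$-Lipschitz, so on the event $|w - w^*| \le \varepsilon n$ we have $|\E[X \mid w] - \E X| = O(\varepsilon n)$; combining the three estimates yields $|X - \E X| = O(\varepsilon n)$ with the desired probability.

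The main obstacle — and the whole reason to introduce $H$ — is to obtain the rate $\exp(-\Omega(\varepsilon^2 n))$. A naive application of McDiarmid to a uniformly random permutation of all $N = n^2$ hyperedges of $L$ would yield only $\exp(-\Omega(\varepsilon^2 n^2 / N)) = \exp(-\Omega(n^{-2a}))$, which is vacuous. The observation that $X$ depends only on the intersection $T \cap H$ with a ground set of size $O(n)$ shrinks the effective sample space from $n^2$ to $n$ and restores the correct rate; everything else (the expectation estimate, the Lipschitz bound, and the union bound over $(i, q, A)$) is routine.
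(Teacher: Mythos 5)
Your proof is correct, and the central observation is precisely the one the paper uses: the number of common $V^q$-neighbours of $A$ in $G(\randL_i)$ depends on only $O(n)$ of the $N=n^2$ hyperedges of $L$ (your set $H$ is exactly the collection of ``relevant'' hyperedges $e_v^w$ in the paper's notation), and this is what restores the rate $\exp(-\Omega(\varepsilon^2 n)) = \exp(-\Omega(n^{1-2a}))$. The difference is purely in how each argument handles sampling without replacement. The paper switches to the binomial model (each hyperedge of $L$ included independently with probability $i/N$) via Pittel's inequality, after which a single application of Azuma--Hoeffding over the $O(n)$ relevant hyperedges finishes the job. You stay in the uniform $i$-subset model and decompose the fluctuation into three pieces: concentration of the hypergeometric variable $w=|T\cap H|$, a McDiarmid-type bound for a uniform $w$-subset of $H$ conditional on $w$, and an $O(1)$-Lipschitz estimate on $w\mapsto\E[X\mid w]$. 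This is slightly longer but avoids Pittel's inequality as a black box, and all three pieces give the same $\exp(-\Omega(\varepsilon^2 n))$ rate. One small point worth tightening: the Lipschitz constant of $w\mapsto\E[X\mid w]$ is $O(1)$ only for $w$ bounded away from $|H|$, which holds in the regime $w \le w^*+\varepsilon n$ with $i\le\alpha N$ and $\alpha<1$ fixed, so you should state that restriction explicitly (the hypergeometric tail bound already confines $w$ to this range with the right probability). You should also note that you need $|X-\E X|\le c\varepsilon n$ for a specific small constant $c=c(h,\alpha)$ tied to the $(1\pm\varepsilon)(1-i/N)^{|A|}n$ target, not just $O(\varepsilon n)$; since the tail bounds are all of the form $\exp(-\Omega(\delta^2 n))$ this is a matter of choosing the $\delta$'s small multiples of $\varepsilon$, but it should be said.
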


The next lemma shows how to compare the triangle removal process to a nicer independent model (with deletions). It is proved in \cref{sec:coupling}.

\begin{lemma}
\label{lem:bite-transfer}Let $\mathcal{P}$ be a property of unordered
partial Latin squares that is monotone increasing in the sense that
$L\in\mathcal{P}$ and $L'\supseteq L$ implies $L'\in\mathcal{P}$.
Fix $\alpha\in\left(0,1\right)$, let $\randL\sim\RR\left(n,\alpha N\right)$,
let $\randG\sim\mathbb{G}^{3}\left(n,p\right)$ for $p = \alpha/n$ and let $\randL^\ast$
be the partial Latin square obtained from $\randG$ by deleting (all
at once) every hyperedge which intersects another hyperedge in more than one
vertex. Then
\[
\Pr\left(\randL\notin\mathcal{P}\emph{ and }\randL\neq\ast\right)=O\left(\Pr\left(\randL^\ast\notin\mathcal{P}\right)\right).
\]
\end{lemma}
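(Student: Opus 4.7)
The plan is to couple $\randL$ and $\randL^\ast$ on a single probability space so that $\randL^\ast\subseteq\randL$ outside an event of probability $\exp(-\Omega(n^2))$; the monotonicity of $\mathcal{P}$ then delivers the conclusion. Assign i.i.d.\ labels $U_e\sim U[0,1]$ to every 3-partite triple $e$, and set $\randG=\{e:U_e\le p\}$, so that $\randG\sim\mathbb{G}^3(n,p)$; define $\randL^\ast$ from $\randG$ as in the statement. I would recast the triangle removal process as the random greedy algorithm that visits triples in increasing order of $U_e$ and adds each $e$ whenever it does not share two vertices with any previously added triple. A standard equivalence (at each addition step, the next accepted triple is uniform over the currently addable triples, since the remaining labels are exchangeable) shows that the sequence of greedy additions has the same law as the triangle removal process. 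Writing $\tau$ for the label of the $\alpha N$-th greedy addition ($\tau=\infty$ if fewer than $\alpha N$ additions ever occur), the first $\alpha N$ additions are therefore distributed as $\randL$.

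The structural core is that every $e\in\randL^\ast$ is greedy-added at its label-time. Indeed, $U_e\le p$ by the definition of $\randG$, and no $e'\in\randG$ conflicts with $e$ by the definition of $\randL^\ast$; in particular, when the greedy algorithm visits $e$, every earlier-added triple lies in $\randG$ and so does not conflict with $e$, whence $e$ is added. Thus $\randL^\ast\subseteq\{\text{greedy additions at times in }[0,p]\}$, and whenever $\tau\ge p$ this yields $\randL^\ast\subseteq\randL$. Monotonicity of $\mathcal{P}$ then forces $\randL\notin\mathcal{P}\Rightarrow\randL^\ast\notin\mathcal{P}$ on this event, so
\[
\Pr(\randL\notin\mathcal{P},\,\randL\neq\ast)\le\Pr(\randL^\ast\notin\mathcal{P})+\Pr(\tau<p).
\]

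It remains to bound $\Pr(\tau<p)=\Pr(M(p)>\alpha N)$, where $M(t)$ counts the greedy additions by time $t$. The standard R\"odl-nibble / differential-equation analysis of the random greedy triangle packing on $K_{n,n,n}$ gives $\E[M(p)]=(1-(1+2\alpha)^{-1/2})n^2+o(n^2)$, and a short Taylor expansion confirms $1-(1+2\alpha)^{-1/2}<\alpha$ for every $\alpha\in(0,1)$ (with a gap of order $\alpha^2 n^2$ for small $\alpha$, and strictly positive throughout). Since $M(\cdot)$ is a unit-jump counting process with random intensity equal to the number of currently-addable triples, a Freedman martingale bound applied to the compensated process $M(t)-\int_0^t A(s)\,ds$, combined with concentration of the intensity $A(t)$ on its nibble trajectory, yields $\Pr(M(p)>\alpha N)\le\exp(-\Omega(n^2))$, which is absorbed into $O(\Pr(\randL^\ast\notin\mathcal{P}))$. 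The main obstacle will be executing this concentration argument cleanly---one must couple a priori control of the allowable-triple count with a martingale tail bound, since a naive bounded-differences bound on the labels $\{U_e\}$ is too weak to produce $\exp(-\Omega(n^2))$---but this is standard in the random greedy literature and exactly mirrors the analogous estimate in~\cite{Kwa20}.
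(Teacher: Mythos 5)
Your coupling is essentially the same as the paper's: i.i.d.\ uniform labels on the triples of $K_{n,n,n}$ realize both $\randG$ (triples with label $\le p$) and the triangle removal process (greedy acceptance in label order), and every triple of $\randL^\ast$ is accepted since all earlier-labelled triples conflicting with it would have to lie in $\randG$, contradicting the definition of $\randL^\ast$. That part is correct.

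The problem is the endgame. You reduce to $\Pr(\randL\notin\mathcal{P},\randL\neq\ast)\le\Pr(\randL^\ast\notin\mathcal{P})+\Pr(\tau<p)$ and then claim $\Pr(\tau<p)\le\exp(-\Omega(n^2))$ ``is absorbed into $O(\Pr(\randL^\ast\notin\mathcal{P}))$.'' This step is invalid: the lemma requires a \emph{multiplicative} bound that holds for every monotone $\mathcal{P}$, and $\Pr(\randL^\ast\notin\mathcal{P})$ can be arbitrarily small (even zero for trivial $\mathcal{P}$, or super-exponentially small for other choices), so an additive error term of size $\exp(-\Omega(n^2))$ — which does not depend on $\mathcal{P}$ at all — cannot be absorbed into $O(\Pr(\randL^\ast\notin\mathcal{P}))$. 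Separately, the claimed $\exp(-\Omega(n^2))$ tail for $\Pr(\tau<p)$ is itself a non-trivial nibble-type concentration statement with an asserted trajectory $1-(1+2\alpha)^{-1/2}$ that you do not prove; this is far more machinery than the lemma needs.

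The observation you are missing is that in this coupling, $\randL$ is determined by the \emph{relative order} of the labels alone (a uniformly random permutation of the $n^3$ triples), while $e(\randG)\sim\mathrm{Bin}(n^3,p)$ is determined by the label \emph{values} — and these two are independent. The right event to condition on is therefore $\{e(\randG)\le\alpha N\}$ rather than $\{\tau\ge p\}$: it still implies $\randL^\ast\subseteq\randL$ whenever $\randL\neq\ast$ (since then all of $\randG$ consists of the first $e(\randG)\le\alpha N$ triples in $\sigma$-order and each $e\in\randL^\ast$ gets accepted within the first $\alpha N$ steps), but unlike your $\tau$ it is independent of $\randL$, so conditioning costs nothing:
\[
\Pr(\randL\notin\mathcal{P},\randL\neq\ast)=\Pr\left(\randL\notin\mathcal{P},\randL\neq\ast\cond e(\randG)\le\alpha N\right)\le\Pr\left(\randL^\ast\notin\mathcal{P}\cond e(\randG)\le\alpha N\right)\le\frac{\Pr(\randL^\ast\notin\mathcal{P})}{\Pr(e(\randG)\le\alpha N)}.
\]
Since $e(\randG)$ is binomial with mean exactly $\alpha N$, we have $\Pr(e(\randG)\le\alpha N)=\Omega(1)$, giving the multiplicative $O(1)$ factor directly, with no differential-equation analysis whatsoever.
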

%In \cref{lem:bite-transfer}, we abuse notation slightly and use the convention that $*$ is a superset of every partial Latin square. That is, if the triangle removal process aborts before reaching $\alpha N$ edges, we still say that it satisfies $\mc P$.

As a final lemma, we verify that the triangle removal process succeeds, i.e., produces a partial Latin square instead of $\ast$, with probability $1-o(1)$ (and in fact produces a quasirandom output). It is proved in \cref{sec:random-triangle-removal}, based on \cref{thm:triangle-removal-analysis}, which is a general analysis of the triangle removal process also needed in \cref{sec:completion-count}.

\begin{lemma}\label{lem:TRP-quasirandom}
The following holds for any $h\in\mb N$. There is a constant $a=a(h)>0$ such that if $\alpha\in(0,1)$ and $\varepsilon=n^{-a}$, then for $\randL\sim\mathbb{R}(n,\alpha N)$ we have
\[\Pr(\randL\notin\mathcal{O}_{\alpha N}^{\varepsilon,h}\emph{ or }\randL=\ast)=o(1).\]
\end{lemma}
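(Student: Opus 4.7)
The plan is to derive this as a fairly immediate corollary of \cref{thm:triangle-removal-analysis}. In the style of the Bohman--Frieze--Lubetzky differential-equation-method analyses of the triangle removal process, that theorem should tell us that with high probability, at every step $i\le\alpha N$, the leftover graph $G_i:=G(\randL_i)$ has edge density within $n^{-a'}$ of the trajectory value $d_i:=1-i/N$, and moreover that for each $q\in\{1,2,3\}$ and each $A\subseteq V\setminus V^q$ with $|A|\le h$ the common neighbourhood of $A$ in $V^q$ has size $(1\pm n^{-a'})d_i^{|A|}n$, for some constant $a'=a'(h)>0$.

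Given such a tracking statement, I would pick $a=a(h)>0$ slightly smaller than $a'$ so that on the high-probability event provided by \cref{thm:triangle-removal-analysis}, the bounds $(1\pm n^{-a'})d_i^{|A|}n$ fit comfortably inside $(1\pm n^{-a})d(G_i)^{|A|}n$ for every $i\le\alpha N$ and every $|A|\le h$. This absorption is possible precisely because $d_i\ge 1-\alpha=\Omega(1)$ throughout the range considered, so multiplicative errors of order $n^{-a'}$ on $d_i^{|A|}n$ translate to comparable multiplicative errors on $d(G_i)^{|A|}n$. The event that $\randL_i\in\cSm{\varepsilon,h}{i}$ for all $i\le\alpha N$, i.e.\ $\randL\in\cordm{\varepsilon,h}{\alpha N}$, is then essentially a restatement of the conclusion of \cref{thm:triangle-removal-analysis} in the language of partial Latin squares. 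To rule out $\randL=\ast$, it suffices to note that any $(\varepsilon,2)$-quasirandom subgraph of $K_{n,n,n}$ of density $\Omega(1)$ contains $\Omega(n^3)$ triangles (counting common neighbours of an edge), so the process cannot run out of triangles before step $\alpha N$.

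The real work is of course \cref{thm:triangle-removal-analysis} itself: its proof requires simultaneous Freedman-style martingale tracking of a whole family of quasirandomness statistics (edge density and codegrees up to order $h$) throughout the process, controlling both one-step fluctuations and cumulative drift against the deterministic trajectory $d_i=1-i/N$. Once that theorem is in hand, \cref{lem:TRP-quasirandom} is a short deduction: re-interpret the tracked quantities in the notation of $\cordm{\varepsilon,h}{\cdot}$, and observe that the maintained quasirandomness automatically keeps triangles abundant.
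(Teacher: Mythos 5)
Your proposal is correct and takes essentially the same route as the paper: invoke \cref{thm:triangle-removal-analysis} starting from $G=K_{n,n,n}$ (which is $(O(1/n),h)$-quasirandom, so the theorem applies with its ``$a$'' set to $1$), choose the lemma's $a$ small enough that the theorem's output parameter $\varepsilon^{b}$ is at most $n^{-a}$, and use that $\alpha<1$ is constant so $\alpha N\le(1-\varepsilon^{b})N$, with the theorem's conclusion $\randM\ge(1-\varepsilon^{b})N$ ruling out $\randL=\ast$. One small clean-up: in the triangle removal process $d(G(\randL_i))=1-i/N$ holds deterministically (each step removes exactly three edges), so the trajectory $d_i$ and the actual density $d(G_i)$ coincide and there is no absorption to perform between them; the paper's safety margin $a=b(1,h)/2$ is only there to soak up the implicit constant in the $(O(1/n),h)$-quasirandomness of $K_{n,n,n}$.
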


\subsection{Notation}

We use standard asymptotic notation throughout. Here and for the rest of the paper, asymptotics are as $n\to \infty$. For functions $f=f\left(n\right)$
and $g=g\left(n\right)$:
\begin{itemize}
\item $f=O\left(g\right)$ means there is a constant $C$ such that $\left|f\right|\le C\left|g\right|$,
\item $f=\Omega\left(g\right)$ means there is a constant $c>0$ such that
$f\ge c\left|g\right|$,
\item $f=\Theta\left(g\right)$ means that $f=O\left(g\right)$ and $f=\Omega\left(g\right)$,
\item $f=o\left(g\right)$ means that $f/g\to0$.
\item By ``asymptotically almost surely'', or ``a.a.s.'', we mean that the probability of an event is $1-o(1)$. In particular, to say that a.a.s.~$f=o\left(g\right)$ means that for any $\varepsilon>0$,
a.a.s.~$f/g<\varepsilon$.

%here is the remark from my other paper: By ``asymptotically almost surely'', or ``a.a.s.'', we mean that the probability of an event is $1-o(1)$. Here and for the rest of the paper, asymptotics are as $n\to \infty$.
\end{itemize}
Also, following \cite{Kee18c}, the notation $f=1\pm\varepsilon$ means
$1-\varepsilon\le f\le1+\varepsilon$.

We also use standard graph theory notation: $V\left(G\right)$ and
$E\left(G\right)$ are the sets of vertices and (hyper)edges of a
(hyper)graph $G$, and $v\left(G\right)$ and $e\left(G\right)$ are
the cardinalities of these sets. The subgraph of $G$ induced by a
vertex subset $U$ is denoted $G\left[U\right]$, the degree of a
vertex $v$ is denoted $\deg_{G}\left(v\right)$, and the subgraph
obtained by deleting $v$ is denoted $G-v$.

For a positive integer $n$, we write $\range n$ for the set $\left\{ 1,2,\dots,n\right\} $.
For a real number $x$, the floor and ceiling functions are denoted
$\floor x=\max\left\{ i\in\ZZ:i\le x\right\} $ and $\ceil x=\min\left\{ i\in\ZZ:i\ge x\right\} $.
We will however mostly omit floor and ceiling signs and assume large
numbers are integers, wherever divisibility considerations are not
important. All logarithms are in base $e$.

Finally, we remark that throughout the paper we adopt the convention
that random variables (and random objects more generally) are printed
in bold.

\section{Randomly ordered Latin squares}\label{sec:randomly-ordered}

In this section we prove \cref{lem:random-is-typical}.
\begin{proof}[Proof of \cref{lem:random-is-typical}]
Recall that $N=n^2$ and consider $m\le\alpha N$. Note that $\randL_{m}$ (as an unordered
partial Latin square) is a uniformly random subset of $m$ hyperedges
of $L$. Also note that 
\[
d\left(G\left(\randL_{m}\right)\right)=\frac{3N-3m}{3N}=1-\frac{m}{N}.
\]
We can obtain a random partial Latin square almost equivalent to $\randL_{m}$
by including each hyperedge of $L$ with independent probability $m/N$.
Let $\randL'$ denote the partial Latin square so obtained, and let
$\randG'=G\left(\randL'\right)$. Now, fix $q\in\left\{ 1,2,3\right\} $
and fix a set $A$ of at most $h$ vertices not in $V^q$. It suffices
to prove
\begin{align}
\left|\bigcap_{w\in A}N_{q}\left(w\right)\right| & =\left(1\pm n^{-a}\right)\left(1-\frac{m}{N}\right)^{\left|A\right|}n,\label{eq:degree-random-binomial}
\end{align}
with probability $1-\exp\left(-\Omega\left(n^{1-2a}\right)\right)$,
where $N_{q}\left(w\right)$ is the neighbourhood of $w$ in $V_{q}$,
in the graph $\randG'$. Indeed, the so-called Pittel inequality (see
\cite[p.~17]{JLR00}) would imply that the same estimate holds with
essentially the same probability if we replace $\randL'$ with $\randL_{m}$
(thereby replacing $\randG'$ with $G\left(\randL_{m}\right)$). We
would then be able to finish the proof by applying the union bound
over all $m\le\alpha N$ and all choices of $A$.

Note that there are at most ${\left|A\right| \choose 2}=O\left(1\right)$
hyperedges of $L$ that include more than one vertex in $A$ (by the
defining property of a Latin square). Let $U$ be the set of vertices
involved in these atypical hyperedges, plus the vertices in $A$,
so that $\left|U\right|=O\left(1\right)$. Let $\randN=\left|\left(\bigcap_{w\in A}N_{q}\left(w\right)\right)\backslash U\right|$.
For every $v\in V^{q}\setminus U$ and $w\in A$ there is exactly
one hyperedge $e_{v}^{w}$ in $L$ containing $v$ and $w$, whose
presence in $\randL'$ would prevent $v$ from contributing to $\randN$.
For each fixed $v\in V^q\setminus U$ the hyperedges $e_{v}^{w}$, for $w\in A$,
are distinct by definition of $U$, so
\[
\Pr\left(v\in\bigcap_{w\in A}N_{q}\left(w\right)\right)=\left(1-\frac{m}{N}\right)^{\left|A\right|},
\]
and thus by linearity of expectation $\E\randN=\left(1-m/N\right)^{\left|A\right|}\left(n-|U|\right)$.
Now, $\randN$ is determined by the presence of at most $\left(n-\left|U\right|\right)\left|A\right|=O\left(n\right)$
hyperedges in $\randL'$, and changing the presence of each affects
$\randN$ by at most $2=O\left(1\right)$. So, by the Azuma\textendash Hoeffding
inequality (see \cite[Section~2.4]{JLR00}),
\begin{align*}
\Pr\left(\left|\randN-\left(1-\frac{m}{N}\right)^{\left|A\right|}n\right|>n^{-a}\left(1-\frac{m}{N}\right)^{\left|A\right|}n-|U|\right) & \le\exp\left(-\Omega\left(\frac{\left(n^{-a}\left(1-\alpha\right)^{h}n\right)^{2}}{n}\right)\right)\\
 & =\exp\left(-\Omega\left(n^{1-2a}\right)\right).
\end{align*}
Finally, we recall that $\left|\left(\bigcap_{w\in A}N_{q}\left(w\right)\right)\right|=\randN\pm\left|U\right|$,
which completes the proof of \cref{eq:degree-random-binomial}.
\end{proof}

\section{Approximate uniformity of the triangle removal process}\label{sec:greedy-random-approx-uniform}

In this section we prove \cref{lem:greedy-random-approx-uniform}.
We first make the simple observation that the number of triangles
in a quasirandom graph $G$ can be easily estimated in terms of the
density of $G$.
\begin{proposition}
\label{prop:H-count}Let $G\subseteq K_{n,n,n}$ be an $\left(\varepsilon,2\right)$-quasirandom
graph on $n$ vertices with $\varepsilon\in(0,1]$. Then the number of triangles in $G$ is $\left(1\pm O\left(\varepsilon\right)\right)n^{3}d\left(G\right)^{3}$.
\end{proposition}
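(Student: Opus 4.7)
The plan is to count triangles by fixing an edge across two of the parts and then counting the common neighborhood in the third. Since $G \subseteq K_{n,n,n}$ is tripartite, every triangle uses exactly one vertex from each of $V^1, V^2, V^3$, so the triangle count is
\[
T(G) = \sum_{\substack{u\in V^1,\, v\in V^2\\ uv\in E(G)}} \left| N(u) \cap N(v) \cap V^3 \right|.
\]
I would then apply the quasirandomness hypothesis twice, once at $|A|=1$ to count the number of $V^1$--$V^2$ edges and once at $|A|=2$ to estimate the common neighborhood in $V^3$.

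First, I would use $(\varepsilon,2)$-quasirandomness with $A = \{u,v\}$ and $q=3$: for every pair $(u,v) \in V^1 \times V^2$ (whether or not they are adjacent), the number of common neighbors in $V^3$ is $(1\pm\varepsilon)\,d(G)^2\,n$. Next, applying $(\varepsilon,1)$-quasirandomness with $q=2$, each $u \in V^1$ has $(1\pm\varepsilon)\,d(G)\,n$ neighbors in $V^2$, so summing over $u$ gives $(1\pm\varepsilon)\,d(G)\,n^2$ edges between $V^1$ and $V^2$. Substituting,
\[
T(G) = \bigl(1\pm\varepsilon\bigr)\,d(G)\,n^2 \cdot \bigl(1\pm\varepsilon\bigr)\,d(G)^2\,n = \bigl(1\pm O(\varepsilon)\bigr)\,n^3\,d(G)^3,
\]
using $(1\pm\varepsilon)^2 = 1\pm O(\varepsilon)$ for $\varepsilon\in(0,1]$, which is the claimed estimate.

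There is essentially no obstacle in this argument: the estimates on edges and on codegrees both hold uniformly over the relevant vertex sets, so the two error factors simply compound into a single $1\pm O(\varepsilon)$ factor. The only point worth noting is that one really does need the case $|A|=2$ of quasirandomness (not just $|A|=1$, which would only give information about degrees) to control the codegree $|N(u)\cap N(v)\cap V^3|$; the case $|A|=1$ is then used merely as a convenient way to count the outer sum.
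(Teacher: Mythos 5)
Your proof is correct and follows essentially the same approach as the paper: count $V^1$--$V^2$ edges via the $|A|=1$ case of quasirandomness, then for each such edge count common neighbours in $V^3$ via the $|A|=2$ case, and multiply the two $(1\pm\varepsilon)$ factors.
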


\begin{proof}
For every vertex $v\in R$, its degree in $C$ is $\left(1\pm\varepsilon\right)nd\left(G\right)$.
So, there are $\left(1\pm\varepsilon\right)n^{2}d\left(G\right)$
edges between $R$ and $C$. Then, for each such edge, the number
of ways to add a vertex $u\in S$ to create a triangle is $\left(1\pm\varepsilon\right)nd\left(G\right)^{2}$.
The desired result follows.
\end{proof}
Now we are ready to prove \cref{lem:greedy-random-approx-uniform}.
\begin{proof}[Proof of \cref{lem:greedy-random-approx-uniform}]
Each $G\left(L_{i}\right)$ has 
\[
\left(1\pm O\left(n^{-a}\right)\right)\left(1-\frac{i}{N}\right)^{3}n^{3}
\]
triangles, by $\left(n^{-a},2\right)$-quasirandomness and \cref{prop:H-count}.
We therefore have
\[
\Pr\left(\randL=L\right)=\prod_{i=0}^{\alpha N-1}\frac{1}{\left(1\pm O\left(n^{-a}\right)\right)\left(1-i/N\right)^{3}n^{3}},
\]
and a similar expression holds for $\Pr\left(\randL=L'\right)$. Taking
quotients term-by-term gives
\begin{align*}
\frac{\Pr\left(\randL=L\right)}{\Pr\left(\randL=L'\right)} & \le\left(1+O\left(n^{-a}\right)\right)^{\alpha N}\\
 & \le\exp\left(O\left(n^{2-a}\right)\right)
\end{align*}
as desired.
\end{proof}

\section{A coupling lemma}\label{sec:coupling}

In this section we prove \cref{lem:bite-transfer}.
\begin{proof}[Proof of \cref{lem:bite-transfer}]
Observe that $\randL^\ast$ can be
coupled with $\randL$ in such a way that, if $e(\randG)\le \alpha N$, then either $\randL^\ast\subseteq\randL$ or $\randL = \ast$. Indeed, an equivalent way to define the triangle removal process (and thus the distribution of $\randL$) is to take a uniformly random ordering of the triangles in $K_{n,n,n}$, go through the triangles in order, and accept each triangle if it is edge-disjoint from previously accepted triangles. Note that a random ordering of the hyperedges of $\randG$ can be viewed as the first $e(\randG) \sim \on{Bin}(n^3,\alpha/n)$ elements of a random
ordering of the set of triangles of $K_{n,n,n}$, and the triangle
removal process with this ordering produces a superset of $\randL^\ast$ whenever $e(\randG)\le\alpha N$ and $\randL\neq\ast$ (since every triangle in $\randL^\ast$ by definition does not share an edge with the prior triangles).

% If $e\left(\randG\right)\le\alpha N$, then $\randL^\ast$ can be
% coupled as a subset of $\randL$. Indeed, a random ordering of the
% edges of $\randG$ can be viewed as the first few elements of a random
% ordering of the set of triangles of $K_{n,n,n}$, and the triangle
% removal process with this ordering produces a superset of $\randL^\ast$.
It follows from this and the monotonicity of $\mathcal{P}$ that
\[
\Pr\left(\randL\notin\mathcal{P}\text{ and }\randL\neq\ast\right)\le\Pr\left(\randL^\ast\notin\mathcal{P}\cond e\left(\randG\right)\le\alpha N\right).
\]
Next, since $e\left(\randG\right)$ has a binomial distribution
with mean $\E e\left(\randG\right)=n^{3}\alpha/n=\alpha N$, it
is easy to see that $\Pr\left(e\left(\randG\right)\le\alpha N\right)=\Omega\left(1\right)$.
It follows that
\begin{align*}
\Pr\left(\randL\notin\mathcal{P}\text{ and }\randL\neq\ast\right) & \le\Pr\left(\randL^\ast\notin\mathcal{P}\right)/\Pr\left(e\left(\randG\right)\le\alpha N\right)=O\left(1\right)\cdot\Pr\left(\randL^\ast\notin\mathcal{P}\right).\qedhere
\end{align*}
\end{proof}

\section{Counting completions of partial Latin squares}\label{sec:completion-count}

\global\long\def\coG{G}%

\global\long\def\Sext#1{\mathcal{L}^\ast\left(#1\right)}%

\global\long\def\randl{\boldsymbol{\lambda}}%

\global\long\def\randz{\boldsymbol{z}}%

\global\long\def\randR{\boldsymbol{R}}%

In this section we prove \cref{lem:num-extensions}. As always, recall that $N=n^{2}$.

For a partial Latin square $L\in\mathcal{L}_{\alpha N}$, let $\Sext L$
be the number of full Latin squares that include $L$. We want to determine
$\left|\ext L\right|=\left(N-\alpha N\right)!\left|\Sext L\right|$
up to a factor of $e^{n^{2-b}}$ (for some $b>0$).

First, we can get an upper bound via the \emph{entropy method}. Before we begin the proof, we briefly remind the reader of the basics
of the notion of entropy. For random elements $\randX,\randY$ with
supports $\supp\randX$, $\supp\randY$, we define the (base-$e$)
\emph{entropy }
\[
H\left(\randX\right)=-\sum_{x\in\supp\randX}\Pr\left(\randX=x\right)\log\left(\Pr\left(\randX=x\right)\right)
\]
and the\emph{ conditional entropy}
\[
H\left(\randX\cond\randY\right)=\sum_{y\in\supp\randY}\Pr\left(\randY=y\right)H\left(\randX\cond\randY=y\right).
\]
We will use two basic properties of entropy. First, we always have
$H\left(\randX\right)\le\log\,\left|\supp\randX\right|$, with equality
only when $\randX$ has the uniform distribution on its support. Second,
for any sequence of random elements $\randX_{1},\dots,\randX_{n}$,
we have
\[
H\left(\randX_{1},\dots,\randX_{n}\right)=\sum_{i=1}^{n}H\left(\randX_{i}\cond\randX_{1},\dots,\randX_{i-1}\right).
\]
See for example \cite{CT12} for an introduction to the notion of
entropy and proofs of the above two facts.
\begin{theorem}
\label{thm:number-of-completions-upper}For any $a\in(0,2)$, any $\alpha\in\left[0,1\right]$,
and any $L\in\cSm{n^{-a},2}{\alpha N}$,
\[
\left|\Sext L\right|\le\left(\left(1+O\left(n^{-a}+n^{-1/2}\right)\right)\left(\frac{1-\alpha}{e}\right)^{2}n\right)^{N\left(1-\alpha\right)}.
\]
\end{theorem}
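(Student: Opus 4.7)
The plan is to apply the entropy method. Let $\randL^*$ be uniform on $\mathcal{L}^*(L)$, so that $\log|\mathcal{L}^*(L)| = H(\randL^*)$. Introduce an independent uniformly random ordering $\pi$ of the $N(1-\alpha)$ hyperedges in $\randL^* \setminus L$, listing them as $e_1, e_2, \ldots, e_{N(1-\alpha)}$. The chain rule yields
\[
H(\randL^*) = -\log((N(1-\alpha))!) + \sum_{i=0}^{N(1-\alpha)-1} H(e_{i+1} \mid e_1, \ldots, e_i).
\]

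Since $\randL^*$ is a Latin square, $e_{i+1}$ is always a triangle in $G(L_i')$, where $L_i' := L \cup \{e_1, \ldots, e_i\}$. Writing $T(\cdot)$ for the triangle count and applying Jensen's inequality, $H(e_{i+1} \mid e_1, \ldots, e_i) \le \log \E[T(G(L_i'))]$. The main technical step is to show that $G(L_i')$ is $(O(n^{-a}), 2)$-quasirandom with overwhelming probability. Indeed, conditional on any realisation of $\randL^*$, the set $\{e_1, \ldots, e_i\}$ is a uniformly random $i$-subset of $\randL^* \setminus L$, and for each $q \in \{1,2,3\}$ and each $A \subseteq V \setminus V^q$ with $|A| \le 2$, the number of common $V^q$-neighbours of $A$ in $G(L_i')$ is determined by the presence of $O(n)$ specific hyperedges of $\randL^*$ (namely $\{e^u_w : u \in A,\ w \in V^q\}$, where $e^u_w$ is the unique hyperedge of $\randL^*$ containing $u$ and $w$). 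The quasirandomness of $L$ shows that the expected value of this count is $(1 \pm O(n^{-a}))(1 - \alpha - i/N)^{|A|} n$, and the Azuma-Hoeffding inequality, essentially as in the proof of \cref{lem:random-is-typical}, supplies concentration on the scale $\exp(-\Omega(n^{1-2a}))$.

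On the quasirandom event \cref{prop:H-count} yields $T(G(L_i')) = (1 + O(n^{-a}))(1 - \alpha - i/N)^3 n^3$, while the exceptional event contributes negligibly (using the crude bound $T \le n^3$). Substituting into the entropy chain rule and using the identity $\prod_{i=0}^{N(1-\alpha)-1}(1 - \alpha - i/N) = (N(1-\alpha))! / N^{N(1-\alpha)}$ together with Stirling's approximation for $\log((N(1-\alpha))!)$, the sum collapses to $N(1-\alpha) \log((1-\alpha)^2 n / e^2)$, up to the stated error. The main obstacle is the quasirandomness claim for the intermediate graphs $G(L_i')$; the $n^{-1/2}$ term in the error will likely arise from handling the tail steps where $1 - \alpha - i/N$ is small enough that \cref{prop:H-count}'s relative error dominates, together with the Stirling correction.
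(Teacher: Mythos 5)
Your high-level framework (entropy, chain rule over a random ordering, Jensen) matches the paper's in spirit, but the decomposition is genuinely different. The paper does not reveal the completion triangle by triangle; it reveals it \emph{cell by cell}: for each row--column pair $e\in G(L)[R\cup C]$, it records the symbol $\randz_e$ placed there, orders the cells by independent $\mathrm{Unif}[0,1]$ labels $\randl_e$, and bounds $H(\randz_e\mid \cdots)$ by $\E\log\randR_e(\randl)$, where $\randR_e(\randl)$ is $1$ plus the number of symbols still available for cell $e$. Conditionally on $\randL^\ast$ and $\randl_e=\lambda_e$, linearity gives $\E\randR_e=1+(1+O(n^{-a}))(1-\alpha)^2\lambda_e^2 n$ exactly, and the error term $O(n^{-a}+n^{-1/2})$ drops out of the closed-form integral $\int_0^1\log(1+Ct^2)\,\mathrm{d}t=\log(1+C)-2+2\arctan\sqrt C/\sqrt C$. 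The ``$+1$'' and the continuous integral absorb the end-game painlessly.

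Your triangle-by-triangle variant is a reasonable alternative, and your chain-rule identity $H(\randL^\ast)=-\log((N(1-\alpha))!)+\sum_i H(e_{i+1}\mid e_1,\dots,e_i)$ and the downstream Stirling arithmetic are correct. But there is a genuine gap in the ``main technical step'': you cannot show that $G(L_i')$ is $(O(n^{-a}),2)$-quasirandom with overwhelming probability once $\beta_i:=1-\alpha-i/N$ is small. For $|A|=2$ the relevant common-neighbourhood count has mean $\approx\beta_i^2 n$ and standard deviation $\Theta(\sqrt n\,\beta_i)$, so once $\beta_i\lesssim n^{-1/2}$ the fluctuations are comparable to the mean and no multiplicative concentration with error $o(1)$, let alone $O(n^{-a})$, is possible; the tail bound $\exp(-\Omega(n^{1-2a}))$ you import from the proof of \cref{lem:random-is-typical} is also only exponentially small for $a<1/2$, while the theorem is claimed for all $a\in(0,2)$. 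The backup ``crude bound $T\le n^3$ on the exceptional event'' does not rescue this range either: when $\beta_i^3 n^3$ is small, even a tiny failure probability times $n^3$ can dominate, and the window where neither the quasirandomness argument nor the crude bound is adequate contributes $\omega(n^{3/2})$ in total. So as written, the tail is not actually handled, and you acknowledge as much without resolving it.

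The cleanest repair within your route is to drop the quasirandomness claim entirely and bound $\E[T(G(L_i'))]$ by direct linearity of expectation over the triangles of $G(L)$. Conditioned on $\randL^\ast$, each non-hyperedge triangle of $G(L)$ survives in $G(L_i')$ with probability at most $(\beta_i/(1-\alpha))^3$, while each of the $N(1-\alpha)$ hyperedge triangles survives with probability $\beta_i/(1-\alpha)$; this gives
\[
\E\bigl[T(G(L_i'))\bigr]\le (1+O(n^{-a}))\beta_i^3 n^3 + n^2\beta_i.
\]
Then $\sum_i\log\bigl(1+O(n^{-a})+1/(n\beta_i^2)\bigr)=O(n^{2-a}+n^{3/2})$, which is exactly the claimed error, and your Stirling computation finishes the proof. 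This is the triangle-count analogue of what the paper's ``$+1$'' achieves automatically.
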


\begin{proof}
Let $\randL^\ast\in\Sext L$ be a uniformly random completion of $L$.
We will estimate the entropy $H\left(\randL^\ast\right)=\log\,\left|\Sext L\right|$
of $\randL^\ast$.

Let $\coG=G\left(L\right)$. For each $e=\left\{ x,y\right\} \in\coG\left[R\cup C\right]$,
let $\left\{ x,y,\randz_{e}\right\} $ be the hyperedge that includes
$e$ in $\randL^\ast$ (i.e., in the $n\times n$ array formulation
of a Latin square, $\randz_{e}$ is the symbol in the cell corresponding
to $\left\{ x,y\right\} $). So, the sequence $\left(\randz_{e}\right)_{e\in\coG\left[R\cup C\right]}$
determines $\randL^\ast$. For any ordering $\prec$ on the edges of $\coG\left[R\cup C\right]$,
we have
\begin{equation}
H\left(\randL^\ast\right)=\sum_{e\in\coG\left[R\cup C\right]}H\left(\randz_{e}\cond\left(\randz_{e'}\,\colon\,e'\prec e\right)\right).\label{eq:H(S)}
\end{equation}
Now, a sequence $\lambda\in\left[0,1\right]^{R\times C}$ with all
$\lambda_{e}$ distinct induces an ordering $\prec_\lambda$ on the edges of $\coG\left[R\cup C\right]$,
with $e'\prec_\lambda e$ when $\lambda_{e'}>\lambda_{e}$. Let $\randR_{e}\left(\lambda\right)$
be an upper bound on $\left|\supp\left(\randz_{e}\cond\{\randz_{e'}\,\colon\,\lambda_{e'}>\lambda_{e}\}\right)\right|$
defined as follows for $e = \{x,y\}$. $\randR_{e}\left(\lambda\right)$ is 1 plus the
number of vertices $v\in S\setminus\left\{ \randz_{e}\right\} $
such that $\left\{ x,v\right\} ,\left\{ y,v\right\} \in\coG$, and
$\lambda_{e'}<\lambda_{e}$ for both the $e'\in\coG\left[R\cup C\right]$
included in the hyperedges that include $\left\{ x,v\right\} $ and
$\left\{ y,v\right\} $ in $\randL^\ast$. (In the $n\times n$ array formulation of a Latin square, this is just the number of symbols whose position has not yet been revealed in the row and column specified by $e$). Note that $\randR_e(\lambda)$ is random depending on all of $(\randz_e)_{e\in\coG\left[R\cup C\right]}$ (hence $\randL^\ast$), even though the support we are bounding is only a function of $(\randz_{e'})_{e'\prec_\lambda e}$.

Since $\randR_{e}\left(\lambda\right)$
is an upper bound on $\left|\supp\left(\randz_{e}\cond\randz_{e'}\,\colon\,\lambda_{e'}>\lambda_{e}\right)\right|$,
we have
\begin{equation}
H\left(\randz_{e}\cond\{\randz_{e'}\,\colon\,\lambda_{e'}>\lambda_{e}\}\right)\le\E\left[\log\randR_{e}\left(\lambda\right)\right].\label{eq:H(z)}
\end{equation}
It follows from \cref{eq:H(S)} applied to $\prec_\lambda$ and \cref{eq:H(z)} that
\[
H\left(\randL^\ast\right)\le\sum_{e\in\coG[R\cup C]}\E\left[\log\randR_{e}\left(\lambda\right)\right].
\]
This is true for any fixed $\lambda$, so it is also true if $\lambda$
is chosen randomly, as follows. Let $\randl=\left(\randl_{e}\right)_{e\in\coG}$
be a sequence of independent random variables, where each $\randl_{e}$
has the uniform distribution in $\left[0,1\right]$. (With probability
1 each $\randl_{v}$ is distinct from the others). Then
\[
H\left(\randL^\ast\right)\le\sum_{e\in\coG[R\cup C]}\E\left[\log\randR_{e}\left(\randl\right)\right].
\]
Next, for any $L^\ast\in\Sext L$ and $\lambda_{e}\in\left[0,1\right]$,
let 
\[
R_{e}^{L^\ast,\lambda_{e}}=\E\left[\randR_{e}\left(\randl\right)\cond\randL^\ast=L^\ast,\,\randl_{e}=\lambda_{e}\right].
\]
(Note that $\randl_{e}=\lambda_{e}$ occurs with probability zero,
so formally we should condition on $\randl_{e}=\lambda_{e}\pm\d\lambda_{e}$
and take limits in what follows, but there are no continuity issues
so we will ignore this detail). Now, in $\coG$, by $\left(n^{-a},2\right)$-quasirandomness,
$x$ and $y$ have $\left(1+O\left(n^{-a}\right)\right)\left(1-\alpha\right)^{2}n$
common neighbours (in $S$) other than $\randz_{e}$. By the definition of $\randR_{e}\left(\randl\right)$
and linearity of expectation, we have 
\[
R_{e}^{L^\ast,\lambda_{e}}=1+\left(1+O\left(n^{-a}\right)\right)\left(1-\alpha\right)^{2}\lambda_{e}^{2}n.
\]
By Jensen's inequality,
\[
\E\left[\log\randR_{e}\left(\randl\right)\cond\randL^\ast=L^\ast,\,\randl_{e}=\lambda_{e}\right]\le\log R_{e}^{L^\ast,\lambda_{e}}.
\]
We then have 
\begin{align*}
\E\left[\log\randR_{e}\left(\randl\right)\cond\randL^\ast=L^\ast\right] & \le\E\left[\log R_{e}^{L^\ast,\randl_{e}}\right]\\
 & =\int_{0}^{1}\log\left(1+\left(1+O\left(n^{-a}\right)\right)\left(1-\alpha\right)^{2}\lambda_{e}^{2}n\right)\d\lambda_{e}.
\end{align*}
For $C>0$ we can compute
\begin{align}
\int_{0}^{1}\log\left(1+Ct^{2}\right)\d t & =\log\left(1+C\right)-2+\frac{2\arctan\sqrt{C}}{\sqrt{C}},\label{eq:complicated-integral}
\end{align}
so (taking $C=\left(1+O\left(n^{-a}\right)\right)\left(1-\alpha\right)^{2}n$)
we deduce
\[
\E\left[\log\randR_{e}\left(\randl\right)\cond\randL^\ast=L^\ast\right]\le\log\left(\left(1-\alpha\right)^{2}n\right)-2+O\left(n^{-a}+n^{-1/2}\right).
\]
We conclude that
\begin{align*}
\log\,\left|\Sext L\right| & = H\left(\randL^\ast\right)\\
 & \le\sum_{e\in\coG[R\cup C]}\E\left[\log\randR_{e}\left(\randl\right)\right]\\
 & \le\left(N-\alpha N\right)\left(\log\left(\left(1-\alpha\right)^{2}n\right)-2+O\left(n^{-a}+n^{-1/2}\right)\right),
\end{align*}
which is equivalent to the theorem statement.
\end{proof}
For the lower bound, we will count ordered Latin squares.
\begin{theorem}
\label{thm:number-of-completions-lower}Fixing sufficiently large
$h\in\NN$ and any $a>0$, there is $b=b\left(a,h\right)>0$ such
that the following holds. For any fixed $\alpha\in\left(0,1\right)$ and
any $L\in\cordm{n^{-a},h}{\alpha N}$,
\[
\left|\ext L\right|\ge\left(\left(1-O\left(n^{-b}\right)\right)\left(\frac{1-\alpha}{e}\right)^{2}n\right)^{N\left(1-\alpha\right)}\left(N-\alpha N\right)!.
\]
\end{theorem}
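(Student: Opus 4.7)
The plan is to count ordered Latin square completions of $L$ by running the 3-partite triangle removal process starting from $G(L)$, mirroring the strategy used in \cite{Kwa20} for Steiner triple systems. Fix a small constant $c=c(a,h)>0$ to be chosen, set $M=(1-\alpha)N$ and $M'=M-\lceil n^{2-c}\rceil$, and consider the triangle removal process on $G(L)$ run for $M'$ steps. By invoking \cref{thm:triangle-removal-analysis}, for $h$ sufficiently large there is a constant $b'=b'(a,h)>0$ such that with probability $\Omega(1)$ this process completes $M'$ steps successfully, with the residual graph $G(L\cup \randL_i)$ remaining $(n^{-b'},2)$-quasirandom at every intermediate step $i\le M'$.

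For any ``nice'' ordered output $L^*_{M'}$ whose intermediate residual graphs are all $(n^{-b'},2)$-quasirandom, \cref{prop:H-count} gives that the number of available triangles at step $i$ is $(1\pm O(n^{-b'}))\bar t_i$ where $\bar t_i=(1-\alpha-i/N)^3 n^3$. Multiplying reciprocals over $i$ yields
\[\Pr(\text{process outputs } L^*_{M'}) \le \exp\left(O(n^{2-b'})\right)\prod_{i=0}^{M'-1}\bar t_i^{\,-1}.\]
Dividing the $\Omega(1)$ probability of a nice outcome by this per-output upper bound gives
\[\#\{\text{nice ordered outputs}\} \ge \exp\left(-O(n^{2-b'})\right)\prod_{i=0}^{M'-1}\bar t_i.\]

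For each nice output $L^*_{M'}$, the residual graph $G(L\cup L^*_{M'})$ has density $n^{-c}$ and is $(n^{-b'},2)$-quasirandom; by Keevash's existence theorem for designs in quasirandom hosts, the corresponding unordered partial Latin square admits at least one completion to a full Latin square. Ordering the $\lceil n^{2-c}\rceil$ remaining hyperedges in any of $\lceil n^{2-c}\rceil!$ ways yields distinct ordered completions of $L^*_{M'}$, so
\[|\ext L| \ge \lceil n^{2-c}\rceil!\cdot \exp\left(-O(n^{2-b'})\right)\prod_{i=0}^{M'-1}\bar t_i.\]

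A direct Stirling calculation using the identity $\prod_{i=0}^{M'-1}\bar t_i=\prod_{j=\lceil n^{2-c}\rceil+1}^M (j^3/n^3)$ shows that the right-hand side differs from the target $((1-\alpha)/e)^{2M}n^M M!$ by a multiplicative factor of $\exp(-(1-2c)n^{2-c}\log n+O(n^{2-c}+n^{2-b'}))$, which is at least $\exp(-O(n^{2-b}))$ for any $b<\min(c,b')$. The main obstacle is the initial step, which requires the substantial analysis encapsulated in \cref{thm:triangle-removal-analysis} (via the differential equation method tracking quasirandomness over many steps). By contrast, the existence input from Keevash's theorem is a clean black-box application, and the final arithmetic is routine bookkeeping.
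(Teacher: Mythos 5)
Your proof is correct and follows essentially the same strategy as the paper: run the triangle removal process on $G(L)$ until a small polynomial fraction of edges remains, invoke \cref{thm:triangle-removal-analysis} for a.a.s.\ quasirandomness, use \cref{prop:H-count} and a ``sum of probabilities is $\Omega(1)$'' argument to lower-bound the number of quasirandom partial outputs, complete each via the Keevash-type completion theorem (\cref{thm:keevash}), and do a Stirling computation. The parametrization is slightly different (you stop at residual density exactly $n^{-c}$, the paper at $n^{-c/\ell}/\varepsilon_0$), and you are slightly more careful than the paper in explicitly multiplying by $\lceil n^{2-c}\rceil!$ for orderings of the final hyperedges, whereas the paper silently absorbs this into the error factor; both are fine.

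One small but genuine imprecision: you define ``nice'' outputs by $(n^{-b'},2)$-quasirandomness, which suffices for \cref{prop:H-count} but not for \cref{thm:keevash}, which requires $(\varepsilon,h)$-quasirandomness for the $h$ appearing in that theorem. Since \cref{thm:triangle-removal-analysis} delivers $(\varepsilon^b,h)$-quasirandomness for any fixed $h\ge 2$, you should simply take $h$ at least as large as the $h$ from \cref{thm:keevash} and define ``nice'' with that $h$. Relatedly, you should spell out the constraints on $c$: you need $c$ small enough so that $n^{-c}\ge n^{-a_K}$ (where $a_K$ is the ``$a$'' from \cref{thm:keevash}), so that $n^{-b'}\le\varepsilon_0 (n^{-c})^\ell$ (i.e.\ $c<b'/\ell$), and so that the process actually reaches residual density $n^{-c}$ before \cref{thm:triangle-removal-analysis}'s guarantee expires (i.e.\ $c\le b'$). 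These are all achievable for $c=c(a,h)$ fixed small, as the paper also does.
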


To prove \cref{thm:number-of-completions-lower} we will need an analysis
of the triangle removal process (which we provide in \cref{sec:random-triangle-removal})
and the following immediate consequence of \cite[Theorem~1.5]{Kee18c}, which counts completions of Latin squares.
\begin{theorem}
\label{thm:keevash}There are $h\in\NN$, $\varepsilon_{0},a\in\left(0,1\right)$
and $n_{0},\ell\in\NN$ such that if $L\in\cSm{\varepsilon,h}m$ is
a partial Latin square with $n\ge n_{0}$, $d\left(G\left(L\right)\right)=1-m/N\ge n^{-a}$
and $\varepsilon\le\varepsilon_{0}d\left(G\right)^{\ell}$, then $L$
can be completed to a Latin square.
\end{theorem}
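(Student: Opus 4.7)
The plan is to produce many ordered Latin squares extending $L$ by running the triangle removal process (TRP) from $G(L)$ for most of the remaining $N(1-\alpha)$ steps, and then invoking \cref{thm:keevash} to fill in the tiny fraction of edges that are left. Concretely, fix a small $a'>0$ (depending on $a$, $h$, and the constants from \cref{thm:keevash}) and set $M = N - n^{2-a'}$, so that the TRP runs for $M-\alpha N$ further steps and the density at the end is $n^{-a'}$.

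First, I would use the forthcoming analysis of the triangle removal process (\cref{thm:triangle-removal-analysis}, proved in \cref{sec:random-triangle-removal}) to argue that when started from $G(L)$, the TRP stays $(n^{-c},h')$-quasirandom throughout its first $M-\alpha N$ steps with probability $1-o(1)$, for some $c = c(a')>0$ and some $h' \ge h$ chosen compatibly with the hypothesis of \cref{thm:keevash} at density $n^{-a'}$. Denote the random output by $\randL'\in\ordm M$; in particular, $\Pr(\randL'\in\cordm{n^{-c},h'}{M})\ge 1/2$. By \cref{prop:H-count}, every intermediate graph $G(\randL'_i)$ in a quasirandom run has $(1\pm O(n^{-c}))(1-i/N)^3 n^3$ triangles, so for each quasirandom output $L'$,
\[
\Pr(\randL'=L') \le \prod_{i=\alpha N}^{M-1}\frac{1+O(n^{-c})}{(1-i/N)^3 n^3},
\]
and dividing the total probability by this maximum yields
\[
\bigl|\{L'\in\cordm{n^{-c},h'}{M}: L'_{\alpha N}=L\}\bigr| \ge \tfrac{1}{2}\prod_{i=\alpha N}^{M-1}(1-O(n^{-c}))(1-i/N)^3 n^3.
\]

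Since \cref{thm:keevash} guarantees that each such $L'$ admits at least one unordered completion to a Latin square and therefore at least $(N-M)!$ ordered completions, while distinct ordered completions of $L$ have distinct $M$-edge prefixes, summing over prefixes gives
\[
|\ext L|\ge\tfrac{1}{2}\,(N-M)!\prod_{i=\alpha N}^{M-1}(1-O(n^{-c}))(1-i/N)^3 n^3.
\]
The remaining task is a calculation showing the right-hand side matches the claimed bound. Using $(N-\alpha N)!/(N-M)!=\prod_{i=\alpha N}^{M-1}(N-i)$ to separate out the factorial factor reduces the bound to
\[
\log\frac{|\ext L|}{(N-\alpha N)!}\ge 2\sum_{i=\alpha N}^{M-1}\log(1-i/N)+(M-\alpha N)\log n - O(n^{2-c}).
\]
Approximating the sum by $N\int_\alpha^1\log(1-x)\,\d x=N[(1-\alpha)\log(1-\alpha)-(1-\alpha)]$ (with the boundary $\beta=1-n^{-a'}$ contributing only $O(n^{2-a'}\log n)$) yields
\[
\log\frac{|\ext L|}{(N-\alpha N)!}\ge N(1-\alpha)\bigl[2\log(1-\alpha)+\log n - 2\bigr]-O(n^{2-b})
\]
for $b=\min(c,a')>0$, which after exponentiation is exactly the claimed bound. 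The main obstacle is the appeal to \cref{thm:triangle-removal-analysis}: one needs quasirandomness of the TRP to persist all the way down to density $n^{-a'}$ (rather than merely constant density as in \cref{lem:TRP-quasirandom}), with parameters strong enough to feed into \cref{thm:keevash}.
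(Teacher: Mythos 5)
There is a genuine gap here, and it is structural: your argument is circular. The statement you were asked to prove is \cref{thm:keevash} itself, yet your proposal explicitly invokes \cref{thm:keevash} ("invoking \cref{thm:keevash} to fill in the tiny fraction of edges that are left", and again when you assert that each quasirandom $M$-edge prefix "admits at least one unordered completion"). What you have actually written is, almost verbatim, the paper's proof of \cref{thm:number-of-completions-lower} --- the quantitative lower bound on $\left|\ext L\right|$ --- which legitimately uses \cref{thm:keevash} as an ingredient. It is not a proof of \cref{thm:keevash}.

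The deeper issue is that the content of \cref{thm:keevash} cannot be obtained from the triangle removal process at all. The paper does not prove this theorem; it quotes it as an immediate consequence of Theorem~1.5 of Keevash \cite{Kee18c}, a major external result on completing quasirandom partial Latin squares (proved via randomised algebraic constructions and absorbers). The TRP, and \cref{thm:triangle-removal-analysis}, can carry a quasirandom partial Latin square down to density $n^{-a'}$ while preserving quasirandomness, but it cannot finish the job: random greedy processes get stuck with a small positive fraction of uncovered pairs, and turning a sparse quasirandom leftover graph into an exact completion is precisely the hard step that Keevash's machinery supplies. Your closing remark identifies the TRP analysis as "the main obstacle", but the real obstacle is the final completion step, which your argument (circularly) delegates to the very theorem under discussion. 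The correct "proof" here is simply a citation.
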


\global\long\def\extm#1#2{\mathcal{O}_{#1}^{\mathrm{ext}}\left(#2\right)}%

\begin{proof}[Proof of \cref{thm:number-of-completions-lower}]
Let $h\ge2$, $\ell$, $\varepsilon_{0}$ be as in \cref{thm:keevash}.
Let $c>0$ be smaller than $a\cdot b\left(a,h\right)$ in the notation
of \cref{thm:triangle-removal-analysis}, and smaller than the ``$a$''
in \cref{thm:keevash}. Let $\varepsilon=n^{-c/\ell}/\varepsilon_{0}$ and $\varepsilon' = n^{-c}$
and $M=\left(1-\varepsilon\right)N$. Let $\randL^\ast\in\ordm M\cup\left\{\ast\right\} $
be the result of running the triangle removal process on $G\left(L\right)$
to build a partial Latin square extending $L$, until there are $M$
hyperedges. Let $\ord^\ast$ be the set of $M$-hyperedge $\left(\varepsilon',h\right)$-quasirandom
ordered partial Latin squares $L^\ast\in\cordm{\varepsilon',h}M$ extending
$L$. The choice of $c$ ensures that by \cref{thm:triangle-removal-analysis}
we a.a.s.~have $\randL^\ast\in\ord^\ast$, and then by \cref{thm:keevash}
each $L^\ast\in\ord^\ast$ can be completed to an ordered Latin square.

Now, by \cref{prop:H-count} and quasirandomness coming from the output of \cref{thm:triangle-removal-analysis}, for each $L^\ast\in\ord^\ast$
the number of triangles in each $G\left(L_{i}^\ast\right)$ is 
\[
\left(1\pm O\left(n^{-c}\right)\right)\left(1-i/N\right)^{3}n^{3},
\]
so
\[
\Pr\left(\randL^\ast=L^\ast\right)\le\prod_{i=\alpha N}^{M-1}\frac{1}{\left(1-O\left(n^{-c}\right)\right)\left(1-i/N\right)^{3}n^{3}}.
\]
As discussed, using \cref{thm:triangle-removal-analysis} we have
\[
\sum_{L^\ast\in\ord^\ast}\Pr\left(\randL^\ast=L^\ast\right)=1-o\left(1\right),
\]
so 
\begin{align*}
\left|\ord^\ast\right| & \ge\left(1-o\left(1\right)\right)\prod_{i=\alpha N}^{M-1}\left(1-O\left(n^{-c}\right)\right)\left(1-\frac{i}{N}\right)^{3}n^{3}\\
 & =\left(\left(1-O\left(n^{-c}\right)\right)n^{3}\right)^{M-\alpha N}\exp\left(3\sum_{i=\alpha N}^{M-1}\log\left(1-\frac{i}{N}\right)\right).
\end{align*}
Now, note that 
\[
\sum_{i=\alpha N}^{M-1}\frac{1}{N}\log\left(1-\frac{i+1}{N}\right)\le\int_{\alpha}^{\left(1-\varepsilon\right)}\log\left(1-t\right)\d t\le\sum_{i=\alpha N}^{M-1}\frac{1}{N}\log\left(1-\frac{i}{N}\right).
\]
We compute
\begin{align*}
\sum_{i=\alpha N}^{M}\left(\log\left(1-\frac{i}{N}\right)-\log\left(1-\frac{i+1}{N}\right)\right) & =\sum_{i=\alpha N}^{M}\log\left(1+\frac{1}{N-\left(i+1\right)}\right)\\
 & \le\sum_{i=\alpha N}^{M}\frac{1}{N-\left(i+1\right)}\\
 & =O\left(\log n\right),
\end{align*}
so, noting that $\int\log s\d s=s\left(\log s-1\right)$,
\begin{align*}
3\sum_{i=\alpha N}^{M}\log\left(1-\frac{i}{N}\right) & =3N\int_{\alpha}^{\left(1-\varepsilon\right)}\log\left(1-t\right)\d t+O\left(\log n\right)\\
 & =3N\int_{\varepsilon}^{\left(1-\alpha\right)}\log s\d s+O\left(\log n\right)\\
 & =3N\left(\left(1-\alpha\right)\left(\log\left(1-\alpha\right)-1\right)-\varepsilon\left(\log\varepsilon-1\right)\right)+O\left(\log n\right),\\
\exp\left(3\sum_{i=\alpha N}^{M}\log\left(1-\frac{i}{N}\right)\right) & =\left(\left(1+O\left(n^{-c/\ell}\log n\right)\right)\frac{1-\alpha}{e}\right)^{3N\left(1-\alpha\right)}.
\end{align*}
For $b<c/\ell$, it follows from this and $M = (1-\varepsilon)N$ that
\begin{align*}
\left|\ord^\ast\right| & \ge\left(\left(1-O\left(n^{-b}\right)\right)\frac{n^{3}\left(1-\alpha\right)^{3}}{e^{3}}\right)^{\left(1-\alpha\right)N}\\
 & =\left(\left(1-O\left(n^{-b}\right)\right)\left(\frac{1-\alpha}{e}\right)^{2}n\right)^{\left(1-\alpha\right)N}\left(N-\alpha N\right)!.
\end{align*}
Recalling that each $L^\ast\in\ord^\ast$ can be completed to a full Latin square, the desired
result follows.
\end{proof}
Now, it is extremely straightforward to prove \cref{lem:num-extensions}.
\begin{proof}
Let $b\le\min\left\{ a,1/2\right\} $ and $h\ge2$ satisfy \cref{thm:number-of-completions-lower}.
By \cref{thm:number-of-completions-upper} we have 
\[
\left|\ext L\right| = \left|\Sext L\right|\left(N-\alpha N\right)!\le\left(\left(1+O\left(n^{-b}\right)\right)\left(\frac{1-\alpha}{e}\right)^{2}n\right)^{N\left(1-\alpha\right)}\left(N-\alpha N\right)!,
\]
and by \cref{thm:number-of-completions-lower} we have
\[
\left|\ext{L'}\right|\ge\left(\left(1-O\left(n^{-b}\right)\right)\left(\frac{1-\alpha}{e}\right)^{2}n\right)^{N\left(1-\alpha\right)}\left(N-\alpha N\right)!.
\]
Dividing these bounds gives
\[
\frac{\left|\ext L\right|}{\left|\ext{L'}\right|}\le\left(1+O\left(n^{-b}\right)\right)^{N\left(1-\alpha\right)}\le\exp\left(O\left(n^{2-b}\right)\right).\qedhere
\]
\end{proof}

\section{An analysis of the triangle removal process}\label{sec:random-triangle-removal}

\global\long\def\randQ{\boldsymbol{Q}}%

\global\long\def\randY{\boldsymbol{Y}}%

\global\long\def\randT{\boldsymbol{T}}%

\global\long\def\randP{\boldsymbol{P}}%

\global\long\def\randZ{\boldsymbol{Z}}%

In this section we prove \cref{thm:triangle-removal-analysis}, which was used in \cref{sec:completion-count}. We also deduce \cref{lem:TRP-quasirandom} from it, which will complete the proofs of all of our claims regarding random Latin squares.

The triangle removal process is defined
as follows. We start with a graph $G\subseteq K_{n,n,n}$ with say
$3N-3m$ edges, then iteratively delete (the edges of) a triangle
chosen uniformly at random from all triangles in the remaining graph.
Let 
\[
G=\randG\left(m\right),\randG\left(m+1\right),\dots
\]
be the sequence of random graphs generated by this process. This process
cannot continue forever, but we ``freeze'' the process instead of
aborting it: if $\randG\left(\randM\right)$ is the first graph in
the sequence with no triangles, then let $\randG\left(i\right)=\randG\left(\randM\right)$
for $i\ge\randM$.

Our objective in this section is to show that if $G$ is quasirandom
then the triangle removal process is likely to maintain quasirandomness
and unlikely to freeze until nearly all edges are gone.
\begin{theorem}\label{thm:triangle-removal-analysis}
For all $h\ge2$ and $a>0$
there are $b=b\left(a,h\right)>0$ and $\varepsilon'=\varepsilon'\left(a,h\right)>0$ such that the following holds. Let
$n^{-a}\le\varepsilon<\varepsilon'$ and suppose $G\subseteq K_{n,n,n}$ is
an $\left(\varepsilon,h\right)$-quasirandom graph with $N-3m$ edges.
Then a.a.s.~$\randM\ge\left(1-\varepsilon^{b}\right)N$ and moreover
for each $m\le i\le\left(1-\varepsilon^{b}\right)N$, the graph $\randG\left(i\right)$
is $\left(\varepsilon^{b},h\right)$-quasirandom.
\end{theorem}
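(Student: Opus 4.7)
The plan is to run the differential equation method: track a family of random common-neighbour counts along the triangle-removal process, show via martingale concentration that they stay close to deterministic trajectories, and union bound over everything. Write $p_i = 1 - i/N$ for the target density of $\randG(i)$, so that $d(G) = p_m$ initially. For each $q\in\{1,2,3\}$ and each vertex set $A\subseteq V\setminus V^q$ with $|A|\le h$, introduce the tracked variable
\[
Y_A(i) \;=\; \bigl|\{\, v\in V^q \,:\, \{v,w\}\in E(\randG(i)) \text{ for every } w\in A\,\}\bigr|,
\]
with deterministic target $\tilde Y_A(i) = p_i^{|A|} n$. The goal is to prove that $Y_A(i) = (1\pm\varepsilon^b)\tilde Y_A(i)$ simultaneously for every such $A$ and every $m\le i\le (1-\varepsilon^b)N$, which is exactly the $(\varepsilon^b,h)$-quasirandomness conclusion.

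The key one-step calculation: assuming $\randG(i)$ is $(\varepsilon^b,h)$-quasirandom with $p_i\ge\varepsilon^b$, \cref{prop:H-count} gives $(1\pm O(\varepsilon^b))p_i^3 n^3$ triangles in $\randG(i)$, and each edge $\{u,v\}$ lies in $Y_{\{u,v\}}(i)=(1\pm O(\varepsilon^b))p_i^2 n$ of them; hence a fixed edge is used by the uniformly-removed triangle with probability $(1\pm O(\varepsilon^b))/(p_i n^2)$. Because the removed triangle has exactly one vertex in $V^q$, the increment $Y_A(i+1)-Y_A(i)$ lies in $\{-1,0\}$; a vertex $v\in N(A)\cap V^q$ leaves iff the removed triangle uses one of the $|A|$ edges $\{v,w\}$ with $w\in A$, and the ``double-hit'' contribution is smaller by a factor $O(1/n)$, so
\[
\E\bigl[Y_A(i+1)-Y_A(i)\,\big|\,\randG(i)\bigr] \;=\; -(1\pm O(\varepsilon^b))\,\frac{|A|}{p_i n^2}\,Y_A(i),
\]
which agrees with the discrete derivative $\tilde Y_A(i+1)-\tilde Y_A(i)$ of the target trajectory up to the same relative error.

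With the drift in hand, I would run a stopping-time supermartingale argument. Let $\tau_\ast$ be the first step at which some $|Y_A(i)-\tilde Y_A(i)|>\tfrac12\varepsilon^b\tilde Y_A(i)$ or $p_i<\varepsilon^b$; on $\{i<\tau_\ast\}$ every tracked quantity is in its good band, so the drift formula above is valid and $\randG(i)$ is $(\varepsilon^b,h)$-quasirandom. For a suitable constant $C=C(h)$, both of $\pm(Y_A(i\wedge\tau_\ast)-\tilde Y_A(i\wedge\tau_\ast)) - C\varepsilon^b\,\tilde Y_A(i\wedge\tau_\ast)$ are supermartingales with one-step jumps of size $O(1)$, and the summed conditional quadratic variation is $O(n)$ (bounded by the total expected decrement of $Y_A$ itself, which is at most $n$). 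Freedman's inequality then yields a failure probability of $\exp(-\Omega(\varepsilon^{2b(h+1)} n)) = \exp(-n^{1-2ab(h+1)-o(1)})$ per $A$, the factor $\varepsilon^{b(h+1)}$ arising because $\tilde Y_A(i)$ can be as small as $\varepsilon^{bh} n$ near the end of the window. Choosing $b=b(a,h)>0$ small enough that $2ab(h+1)<1$ and union-bounding over the $O(n^h)$ choices of $A$ and $N$ time-steps kills this. Finally, quasirandomness with $p_i\ge\varepsilon^b$ implies via \cref{prop:H-count} that $\randG(i)$ still contains $\Omega(\varepsilon^{3b} n^3)>0$ triangles, so the process cannot freeze before step $(1-\varepsilon^b)N$; this gives $\randM\ge (1-\varepsilon^b)N$ a.a.s.~and closes the argument.

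The main obstacle is the self-referential nature of the drift estimate: the one-step change of $Y_A$ is expressed in terms of edge-triangle counts $Y_{\{u,v\}}$, which are themselves among the tracked quantities. The stopping-time device resolves this by fiat — up to $\tau_\ast$ every tracked quantity is in its good band by definition, so the drift computation is valid on $\{i<\tau_\ast\}$, and Freedman concentration then forces $\tau_\ast>(1-\varepsilon^b)N$ w.h.p. A secondary technical point is that when $p_i$ drops to $\varepsilon^b$ the allowed error band $\tfrac12\varepsilon^b\tilde Y_A$ is as small as $\varepsilon^{b(h+1)} n$; this is what forces the final constraint $b\ll 1/(a(h+1))$, and is the reason $b$ must depend on $h$ as well as on $a$.
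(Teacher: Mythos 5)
Your high-level plan (track the common-neighbourhood statistics $Y_A$, stop the process when they leave a good band, show the stopped processes concentrate via Freedman, then union bound) is the same as the paper's, and the count of triangles, the one-step drift of $Y_A$, and the variance bound $O(n)$ are all right. But the supermartingale you propose is wrong, and this is precisely where the argument needs its one real idea. You set $Z^{\pm}(i)=\pm\bigl(Y_A(i\wedge\tau_\ast)-\tilde Y_A(i\wedge\tau_\ast)\bigr)-C\varepsilon^b\tilde Y_A(i\wedge\tau_\ast)$. Since $\tilde Y_A(i)=p_i^{|A|}n$ is \emph{decreasing}, the slack $-C\varepsilon^b\tilde Y_A(i)$ is \emph{increasing}, so its one-step increment adds a \emph{positive} term $C\varepsilon^b\cdot|A|p_i^{|A|-1}n/N\cdot(1+o(1))$ to $\E[\Delta Z^{\pm}(i)\mid\randG(i)]$; taking $C$ larger makes this worse. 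The self-correcting part $-\frac{|A|}{p_iN}(Y_A-\tilde Y_A)$ of the drift of $Y_A-\tilde Y_A$ does not rescue this, because the ``noise'' in the drift comes from quasirandomness errors in the coupled statistics $Y_{\{u,v\}}$ (which enter the triangle counts), not from $Y_A$ itself, so it is of the same order $O(\varepsilon^b)\,|A|p_i^{|A|-1}n/N$ and of unknown sign. Even at the level of orders of magnitude the plan cannot work: the summed one-step drift errors contribute
\[
\sum_{i=m}^{(1-\varepsilon^b)N} O(\varepsilon^b)\,\frac{|A|p_i^{|A|-1}n}{N} \;=\; \Theta(\varepsilon^b n),
\]
while at the end of the window (where $p_i\approx\varepsilon^b$) your allowed band is only $\varepsilon^b\,\tilde Y_A(i)\approx\varepsilon^{b(|A|+1)}n$, which for $|A|\ge 1$ is much smaller. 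A fixed relative-error band simply cannot absorb the accumulated drift error.

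The paper's resolution (the ``critical interval'' device) is to make the relative error \emph{grow} along the process: it uses $e(i)=p(i)^{-C}\varepsilon^c$ with a large constant $C$, so the band $e(i)p(i)^{|A|}n=\varepsilon^c p(i)^{|A|-C}n$ increases as $p(i)$ decreases. With the stopped processes $\randY_A^{+}(i)=\randY_A(i\wedge T)-p^{|A|}(i\wedge T)n\bigl(1+e(i\wedge T)\bigr)$ and $\randY_A^{-}$ analogous, the key inequality is
\[
\Delta\bigl(ep^{|A|}\bigr)(i)\,n \;=\; \Theta\!\left(\frac{(C-|A|)\,e(i)\,p^{|A|-1}(i)}{n}\right) \;>\; 0 \quad\text{for } C>|A|,
\]
which for $C$ large dominates the $O\bigl(e(i)p^{|A|-1}(i)/n\bigr)$ drift error, so the $\randY_A^{\pm}$ genuinely are supermartingales. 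The constant $b$ is then taken below $c/(C+1)$ to ensure $e(i)\le\varepsilon^b$ throughout the window, which yields $(\varepsilon^b,h)$-quasirandomness as stated (and also explains why $b$ shrinks with $h$). If you replace your fixed relative band by such a growing one, the rest of your outline (freezing, Freedman with $t=\Omega(n\varepsilon^c)$ and $v=O(n)$, union bound over $A$) goes through as in the paper.
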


Note that $K_{n,n,n}$ is $\left(O\left(1/n\right),h\right)$-quasirandom
for any fixed $h$, so in particular when we start the triangle removal
process from $G=K_{n,n,n}$ it typically runs almost to completion. This is encapsulated by \cref{lem:TRP-quasirandom}, which we deduce before moving on to the proof of \cref{thm:triangle-removal-analysis}.
\begin{proof}[Proof of \cref{lem:TRP-quasirandom}]
Apply \cref{thm:triangle-removal-analysis} to $G = K_{n,n,n}$, which is $(O(1/n),h)$-quasirandom, with its ``$a$'' set to $1$. Letting $a = b(1,h)/2$, the result immediately follows since $\alpha$ is a constant.
\end{proof}

To prove \cref{thm:triangle-removal-analysis}, it will be convenient
to use Freedman's inequality \cite[Theorem~1.6]{Fre75}, as follows.
(This was originally stated for martingales, but it also holds for
supermartingales with the same proof). Here and in what follows, we
write $\Delta X\left(i\right)$ for the one-step change $X\left(i+1\right)-X\left(i\right)$
in a variable $X$.
\begin{lemma}
\label{lem:freedman}Let $\randX\left(0\right),\randX\left(1\right),\dots$
be a supermartingale with respect to a filtration $\left(\mathcal{F}_{i}\right)$.
Suppose that $\left|\Delta\randX\left(i\right)\right|\le K$ for all
$i$, and let $V\left(i\right)=\sum_{j=0}^{i-1}\E\left[\left(\Delta\randX\left(j\right)\right)^{2}\cond\mathcal{F}_{j}\right]$.
Then for any $t,v>0$,
\[
\Pr\left(\randX\left(i\right)\ge\randX\left(0\right)+t\mbox{ and }V\left(i\right)\le v\mbox{ for some }i\right)\le\exp\left(-\frac{t^{2}}{2\left(v+Kt\right)}\right).
\]
\end{lemma}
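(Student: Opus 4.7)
The plan is the classical exponential supermartingale (Cram\'er--Chernoff) argument, with the standard trick of absorbing the predictable quadratic variation into a time-dependent correction so that the supermartingale hypothesis $\E[\Delta\randX(i)\mid\mathcal{F}_i]\le 0$ can be used cleanly. Without loss of generality assume $\randX(0)=0$. For a parameter $\lambda>0$ to be chosen at the end, I would introduce
$$\randM(i)\;=\;\exp\bigl(\lambda\randX(i)-g(\lambda)V(i)\bigr),\qquad g(\lambda):=\frac{e^{\lambda K}-1-\lambda K}{K^2},$$
and show $\randM$ is a nonnegative supermartingale with $\randM(0)=1$.

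The one-variable inequality needed to verify the supermartingale property is that for every $y\le K$ and every $\lambda>0$,
$$e^{\lambda y}\;\le\;1+\lambda y+g(\lambda)\,y^2,$$
which follows because $y\mapsto(e^{\lambda y}-1-\lambda y)/y^2$ is nondecreasing in $y$ (a quick Taylor-series comparison). Applying this with $y=\Delta\randX(i)\le K$, taking $\mathcal{F}_i$-conditional expectation, and invoking $\E[\Delta\randX(i)\mid\mathcal{F}_i]\le 0$ (the $\lambda y$ term only helps since $\lambda>0$), I obtain
$$\E\bigl[e^{\lambda\Delta\randX(i)}\bigm|\mathcal{F}_i\bigr]\;\le\;1+g(\lambda)\E[(\Delta\randX(i))^2\mid\mathcal{F}_i]\;\le\;\exp\bigl(g(\lambda)[V(i+1)-V(i)]\bigr).$$
Multiplying by $\randM(i)$ gives $\E[\randM(i+1)\mid\mathcal{F}_i]\le\randM(i)$ as required.

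Next, set $\tau=\inf\{i:\randX(i)\ge t\}$, with $\tau=\infty$ otherwise. Because $V$ is nondecreasing in $i$, on the event $A$ whose probability we want to control we have $\tau<\infty$ and $V(\tau)\le v$, so $\randM(\tau)\ge\exp(\lambda t-g(\lambda)v)$. Since $\randM$ is a nonnegative supermartingale with $\randM(0)=1$, optional stopping applied to the bounded stopping times $\tau\wedge n$ together with Fatou's lemma gives $\E[\randM(\tau)\one_{\tau<\infty}]\le 1$, and Markov's inequality then yields
$$\Pr(A)\;\le\;\exp\bigl(-\lambda t+g(\lambda)v\bigr).$$

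It remains only to optimize over $\lambda$. Using the elementary estimate $g(\lambda)\le\lambda^2/\bigl(2(1-\lambda K/3)\bigr)$ (valid for $0<\lambda K<3$) and plugging in the Bernstein-type choice $\lambda=t/(v+Kt)$ makes the linear and quadratic contributions comparable and collapses the exponent to at most $-t^2/\bigl(2(v+Kt)\bigr)$, which is the stated bound. The only slightly finicky steps are this final algebraic optimization and the monotonicity check used to derive the inequality on $g(\lambda)$; the exponential supermartingale construction itself is routine, and Freedman's original martingale argument transfers verbatim to supermartingales because $\E[\Delta\randX(i)\mid\mathcal{F}_i]\le 0$ is used only in the favorable direction.
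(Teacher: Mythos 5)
Your proof is correct. Note that the paper does not actually prove this lemma: it cites it as \cite[Theorem~1.6]{Fre75} with the one-line remark that Freedman's martingale proof goes through verbatim for supermartingales. Your argument is exactly that standard proof --- the exponential supermartingale $\exp(\lambda\randX(i)-g(\lambda)V(i))$ with $g(\lambda)=(e^{\lambda K}-1-\lambda K)/K^2$, optional stopping at the first time $\randX$ exceeds $t$, and the Bernstein-type choice $\lambda=t/(v+Kt)$ --- and it correctly confirms the paper's remark, since the supermartingale hypothesis $\E[\Delta\randX(i)\mid\mathcal F_i]\le 0$ is used only in the favorable direction. All the individual steps (the monotonicity of $u\mapsto(e^u-1-u)/u^2$, the bound $g(\lambda)\le\lambda^2/(2(1-\lambda K/3))$, and the final algebra collapsing the exponent to $-t^2/(2(v+Kt))$) check out.
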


\begin{proof}[Proof of \cref{thm:triangle-removal-analysis}]
For $q\in\left\{ 1,2,3\right\} $ and a set $A\subseteq V\setminus V^{q}$
of at most $h$ vertices, let $\randY_{A}\left(i\right)=\left|\bigcap_{w\in A}N_{q}^{\left(i\right)}\left(w\right)\right|$,
where $N_{q}^{\left(i\right)}\left(w\right)$ is the number of neighbours
of $w$ in $V^{q}$, in the graph $\randG\left(i\right)$. Let $p\left(i\right)=\left(1-i/N\right)$
and let $p^{k}\left(i\right)=\left(1-i/N\right)^{k}$, so that $p^{\left|A\right|}\left(i\right)n$
is the predicted trajectory of each $\randY_{A}\left(i\right)$.

Fix some large $C$ and small $c$ to be determined. We will choose
$b<c/\left(C+1\right)$ so that $e\left(i\right):=p\left(i\right)^{-C}\varepsilon^{c}\le\varepsilon^{b}$
for $i\le N\left(1-\varepsilon^{b}\right)$. This means that if the
conditions
\begin{align*}
\randY_{A}\left(i\right) & \le p^{\left|A\right|}\left(i\right)n\left(1+e\left(i\right)\right),\\
\randY_{A}\left(i\right) & \ge p^{\left|A\right|}\left(i\right)n\left(1-e\left(i\right)\right),
\end{align*}
are satisfied for all $A$, then $\randG\left(i\right)$ is $\left(e\left(i\right),h\right)$-quasirandom
(therefore $\left(\varepsilon^{b},h\right)$-quasirandom).

Let $\randT'$ be the smallest index $i\ge m$ such that for some
$A$, the above equations are violated (let $\randT'=\infty$ if this
never happens). Let $\randT=\randT'\land N\left(1-\varepsilon^{b}\right)$, where $\land$ denotes the minimum.
Define the stopped processes 
\begin{align*}
\randY_{A}^{+}\left(i\right) & =\randY_{A}\left(i\land\randT\right)-p^{\left|A\right|}\left(i\land\randT\right)n\left(1+e\left(i\land\randT\right)\right),\\
\randY_{A}^{-}\left(i\right) & =-\randY_{A}\left(i\land\randT\right)+p^{\left|A\right|}\left(i\land\randT\right)n\left(1-e\left(i\land\randT\right)\right).
\end{align*}
We want to show that for each $A$ and each $s\in\left\{ +,-\right\} $,
the process $\randY_{A}^{s}=\left(\randY_{A}^{s}\left(i\right),\randY_{A}^{s}\left(i+1\right),\dots\right)$
is a supermartingale, and then we want to use \cref{lem:freedman}
and the union bound to show that a.a.s.~each $\randY_{A}^{s}$ only
takes negative values.

To see that this suffices to prove \cref{thm:triangle-removal-analysis},
note that if $i<\randT$ then by \cref{prop:H-count} the number of
triangles in $\randG\left(i\right)$ is
\[
\randQ\left(i\right)=\left(1\pm O\left(e\left(i\right)\right)\right)p^{3}\left(i\right)n^{3},
\]
which is positive as $e(i)\le\varepsilon^b$ and as long as we choose $\varepsilon'(a,h)$ small enough.

This means $\randT\le\randM$, so the event that each $\randY_{A}^{s}$
only takes negative values contains the event that each $\randG\left(i\right)$
is non-frozen and sufficiently quasirandom for $i\le N\left(1-\varepsilon^{b}\right)$.

Let $\randR_{A}\left(i\right)=\bigcap_{w\in A}N_{q}^{\left(i\right)}\left(w\right)$,
so that $\randY_{A}\left(i\right)=\left|\randR_{A}\left(i\right)\right|$.
Fix $A$, and consider $x\in\randR_{A}\left(i\right)$, for $i<\randT$.
The only way we can have $x\notin\randR_{A}\left(i+1\right)$ is if
we remove a triangle containing an edge $\left\{ x,w\right\} $ for
some $w\in A$. Now, for each $w\in A$, the number of triangles in
$\randG\left(i\right)$ containing the edge $\left\{ x,w\right\} $
is $\left(1\pm O\left(e\left(i\right)\right)\right)p^{2}\left(i\right)n$
by $(e(i),2)$-quasirandomness. The number of triangles containing $x$ and
more than one vertex of $A$ is $O\left(1\right)$. So, if $b$ is small enough then for realizations of $\randG\left(i\right)$ with $i<\randT$ we have
\begin{align*}
\Pr\left(x\notin\randR_{A}\left(i+1\right)\cond\randG\left(i\right)\right) & =\frac{1}{\randQ\left(i\right)}\left(\sum_{w\in A}\left(1\pm O\left(e\left(i\right)\right)\right)p^{2}\left(i\right)n-O\left(1\right)\right)\\
 & =\left(1\pm O\left(e\left(i\right)\right)\right)\frac{\left|A\right|}{p\left(i\right)N}.
\end{align*}
For $i<\randT$ we have $\left|\randR_{A}\left(i\right)\right|=\left(1\pm e\left(i\right)\right)p^{\left|A\right|}\left(i\right)n$,
so by linearity of expectation
\begin{align*}
\E\left[\Delta\randY_{A}\left(i\right)\cond\randG\left(i\right)\right] & =-\left(1\pm O\left(e\left(i\right)\right)\right)\frac{\left|A\right|p^{\left|A\right|-1}\left(i\right)n}{N}\\
 & =-\frac{\left|A\right|p^{\left|A\right|-1}\left(i\right)n}{N}+O\left(\frac{e\left(i\right)p^{\left|A\right|-1}\left(i\right)}{n}\right).
\end{align*}
Note also that we have the bound $|\Delta\randY_{A}\left(i\right)|\le2=O\left(1\right)$
(with probability 1). Also, for fixed $k$, we have
\begin{align*}
\Delta p^{k}\left(i\right) & =\left(1-\frac{i+1}{N}\right)^{k}-\left(1-\frac{i}{N}\right)^{k}\\
 & =\left(1-\frac{i}{N}\right)^{k}\left(\left(\frac{N-i-1}{N-i}\right)^{k}-1\right)\\
 & =p^{k}\left(i\right)\left(\left(1-\frac{1}{N-i}\right)^{k}-1\right)\\
 & =p^{k}\left(i\right)\left(-\frac{k}{N-i}+O\left(\frac{1}{\left(N-i\right)^{2}}\right)\right)\\
 & =-\frac{kp^{k-1}\left(i\right)}{N}\left(1+O\left(\frac{p\left(i\right)}{n^{2}}\right)\right)\\
 & =-\frac{kp^{k-1}\left(i\right)}{N}+o\left(\frac{e\left(i\right)p^{k-1}\left(i\right)}{n^{2}}\right),
\end{align*}
and with $ep^{k}$ denoting the pointwise product $i\mapsto e\left(i\right)p^{k}\left(i\right)$,
we then have
\begin{align*}
\Delta\left(ep^{k}\right)\left(i\right) & =\varepsilon^{c}\Delta p^{k-C}\left(i\right)\\
 & =\varepsilon^{c}\Theta\left(\frac{\left(C-k\right)p^{k-C-1}\left(i\right)}{N}\right)\\
 & =\Theta\left(\frac{\left(C-k\right)e\left(i\right)p^{k-1}\left(i\right)}{n^{2}}\right).
\end{align*}
Also, $\Delta(ep^k)(i) > 0$ for $C > k$.
For large $C$ it thus follows that
\begin{align*}
\E\left[\Delta\randY_{A}^{+}\left(i\right)\cond\randG\left(i\right)\right] & =\E\left[\Delta\randY_{A}\left(i\right)\cond\randG\left(i\right)\right]-\Delta p^{\left|A\right|}\left(i\right)n-\Delta\left(ep^{\left|A\right|}\right)\left(i\right)n\le0,
\end{align*}
and similarly
\[
\E\left[\Delta\randY_{A}^{-}\left(i\right)\cond\randG\left(i\right)\right]\le0
\]
for $i<\randT$. (For $i\ge\randT$ we trivially have $\Delta\randY_{A}^{s}\left(i\right)=0$)
Since each $\randY_{A}^{s}$ is a Markov process, it follows that
each is a supermartingale. Now, we need to bound $\Delta\randY_{A}^{s}\left(i\right)$
and $\E\left[\left(\Delta\randY_{A}^{s}\left(i\right)\right)^{2}\cond\randG\left(i\right)\right]$,
which is easy given the preceding calculations. First, recalling that
$\Delta\randY_{A}\left(i\right)=O\left(1\right)$ and noting that
$\Delta p^{k}\left(i\right),\Delta\left(ep^{k}\right)\left(i\right)=O\left(1/N\right)$
we immediately have $\left|\Delta\randY_{A}^{s}\left(i\right)\right|=O\left(1\right)$.
Noting in addition that $\E\left[\Delta\randY_{A}\left(i\right)\cond\randG\left(i\right)\right]=O\left(1/n\right)$,
we have
\begin{align*}
\E\left[\left(\Delta\randY_{A}^{s}\left(i\right)\right)^{2}\cond\randG\left(i\right)\right] & =O\left(\E\left[\Delta\randY_{A}^{s}\left(i\right)\cond\randG\left(i\right)\right]\right)=O\left(\frac{1}{n}\right).
\end{align*}
Since $\randT\le N$, we also have
\[
\sum_{i=m}^{\infty}\E\left[\left(\Delta\randY_{A}^{s}\left(i\right)\right)^{2}\cond\randG\left(i\right)\right]=O\left(\frac{N}{n}\right)=O\left(n\right).
\]
Provided $c<1$ and $C$ is large enough (and recalling that $\varepsilon<\varepsilon'(a,h)$), applying
\cref{lem:freedman} with $t=e\left(m\right)p^{\left|A\right|}\left(m\right)n-\varepsilon p^{\left|A\right|}\left(m\right)n=\Omega\left(n\varepsilon^{c}\right)$
and appropriate $v=O\left(n\right)$ then gives
\[
\Pr\left(\randY_{A}^{s}\left(i\right)>0\mbox{ for some }i\right)\le\exp\left(-O\left(n\varepsilon^{2c}\right)\right).
\]
Indeed, recall that we start at step $i=m$, and the initial quasirandomness conditions show that $\randY_A^s\left(m\right)\le \varepsilon p^{\left|A\right|}\left(m\right)n-e\left(m\right)p^{\left|A\right|}\left(m\right)n$.

So, if $2c<1/a\le\log_{1/\varepsilon}n$, the union bound over all $A,s$
finishes the proof.
\end{proof}

\bibliographystyle{amsplain_initials_nobysame_nomr}
\bibliography{main}

\providecommand{\bysame}{\leavevmode\hbox to3em{\hrulefill}\thinspace}
\providecommand{\MR}{\relax\ifhmode\unskip\space\fi MR }
% \MRhref is called by the amsart/book/proc definition of \MR.
\providecommand{\MRhref}[2]{%
  \href{http://www.ams.org/mathscinet-getitem?mr=#1}{#2}
}
\providecommand{\href}[2]{#2}
\begin{thebibliography}{1}

\bibitem{CT12}
T.~M. Cover and J.~A. Thomas, \emph{Elements of information theory}, John Wiley
  \& Sons, 2012.

\bibitem{Fre75}
D.~A. Freedman, \emph{On tail probabilities for martingales}, Ann. Probability
  \textbf{3} (1975), 100--118.

\bibitem{JLR00}
S.~Janson, T.~{\L{}}uczak, and A.~Ruci{\'n}ski, \emph{Random graphs}, Cambridge
  University Press, 2000.

\bibitem{Kee18c}
P.~Keevash, \emph{The existence of designs {II}}, arXiv:1802.05900.

\bibitem{Kwa20}
M.~Kwan, \emph{Almost all {S}teiner triple systems have perfect matchings},
  Proc. Lond. Math. Soc. (3) \textbf{121} (2020), no.~6, 1468--1495.

\bibitem{KSS21}
M.~Kwan, A.~Sah, and M.~Sawhney, \emph{Large deviations in random {L}atin
  squares}, arXiv:2106.11932.

\end{thebibliography}

\end{document}